\numberwithin{equation}{section}
\providecommand{\U}[1]{\protect\rule{.1in}{.1in}}
\providecommand{\U}[1]{\protect \rule{.1in}{.1in}}
\newtheorem{theorem}{Theorem}[section]
\newtheorem{definition}[theorem]{Definition}
\newtheorem{lemma}[theorem]{Lemma}
\newtheorem{proposition}[theorem]{Proposition}
\newtheorem{remark}[theorem]{Remark}
\newtheorem{assumption}[theorem]{Assumption}
\newenvironment{proof}[1][Proof]{\noindent \textbf{#1.} }{\  \rule{0.5em}{0.5em}}
\def \E{\mathsf{E}}
\def \P{\mathsf{P}}
\def \E{\mathsf{E}}
\def \P{\mathsf{P}}
\begin{document}
	\title{Conditional Expectation Backward Stochastic Differential Equations and Related Backward Stochastic Differential Equations with Conditional Reflection}
	\author{ 	Hanwu Li\thanks{Research Center for Mathematics and Interdisciplinary Sciences, Shandong University, Qingdao 266237, Shandong, China. lihanwu@sdu.edu.cn.}
	\thanks{Frontiers Science Center for Nonlinear Expectations (Ministry of Education), Shandong University, Qingdao 266237, Shandong, China.}
    \thanks{Shandong Province Key Laboratory of Financial Risk, Shandong University, Qingdao 266237, Shandong, China.}}
	\date{}
	\maketitle
	
	\begin{abstract}
     In this paper, we introduce a new type of backward stochastic differential equations (BSDEs), called  conditional expectation BSDEs, whose drivers depend not only on the value of the solutions but also on their conditional expectations with respect to a certain sub-$\sigma$-algebra. The collection of these sub-$\sigma$-algebra forms a subfiltration, which stands for partial information that is common for decision making applications. The classical BSDEs and the mean-field BSDEs can be regarded as two special and extreme cases of conditional expectation BSDEs. We  establish the well-posedness for conditional expectation BSDEs under mild conditions and discuss the comparison results. Then, we provide an alternative construction for the solutions to conditional reflected BSDEs without the left-continuity assumption for the subfiltration, which can be seen as the limit of a sequence of penalized conditional expectation BSDEs.
	\end{abstract}

    \textbf{Key words}: conditional expectation BSDE, conditional reflected BSDE, partial information, comparison theorem

    \textbf{MSC-classification}: 60H10

\section{Introduction}   

In 1990, Pardoux and Peng \cite{PP} first introduced the following nonlinear backward stochastic differential equations (BSDEs)
\begin{align*}
    Y_t=\xi+\int_t^T f(s,Y_s,Z_s)ds-\int_t^T Z_sdB_s.
\end{align*}
For a given square integrable terminal value $\xi$ and a Lipschitz continuous driver $f$, the authors established the existence of a unique pair of solution $(Y,Z)$ to the above equation. It has been widely recognized that the theory of BSDEs provides a useful tool for formulating many problems in mathematical finance, stochastic control, stochastic representation for partial differential equations (PDEs) and differential games (e.g., \cite{DE,EPQ,HL,PP92}). Since then, many extensions have been derived in diverse directions. For example, the regularity of the driver has be weakened in order to deal with the pricing for the European options via exponential utility and the quasilinear PDEs when the nonlinearity has a quadratic growth in the gradient of the solution (see \cite{BH1,BH2,Kobylanski,LS}). Motivated by the models of large stochastic particle systems with mean-field interaction and the mean-field limits in various areas such as statistical mechanics and physics, quantum mechanics and quatum chemistry, Buckdahn et al. \cite{BDLP} investigated a special mean-field problem in a purely stochastic approach and obtained a new kind of BSDEs, called mean-field BSDEs (see \cite{BDL,BLP,BLPR,DEH,L1,L2} and the references therein for more aspects about the mean-field problem). Roughly speaking, the driver of the mean-field BSDE depends on the distribution of the solution. Furthermore, the consideration of additional conditions on the stochastic control problems and mathematical finance naturally led to the investigation of constrained BSDEs (see \cite{BEH,CK,KKPPQ,HHL,HT,Li,PX}). In particular, motivated by the financial problems subject to some state constraints but in the context of partial information, such as pricing American options and recursive reflected utility maximization with partial information, Hu, Huang and Li \cite{HHL} proposed the BSDE with conditional reflection. More precisely, the conditional reflected BSDE associated with the terminal value $\xi$, the driver $f$ and the barrier process $S$ takes the following form
\begin{equation}\label{nonlinearyz}
\begin{cases}
Y_t=\xi+\int_t^T f(s,Y_s,Z_s)ds-\int_t^T Z_s dB_s+K_T-K_t, \\
\E[Y_t|\mathcal{G}_t]\geq \E[S_t|\mathcal{G}_t], \ t\in[0,T], \\
K\in \mathcal{A}^2_{\mathbb{G}} \textrm{ and }
\int_0^T \E[Y_t-S_t|\mathcal{G}_t]dK_t=0.
\end{cases}
\end{equation}
Here, $\mathbb{G}=\{\mathcal{G}_t\}_{t\in[0,T]}$ is a given  subfiltration and $\mathcal{A}^2_{\mathbb{G}}$ is the collection of $\mathbb{G}$-adapted, nondecreasing and continuous function starting from the origin. In \cite{HHL}, the authors established the existence of a solution to the conditional reflected BSDE \eqref{nonlinearyz} by a contraction arguments. 


In this paper, the objective is to investigate the following type of  BSDEs
\begin{equation}\label{CEBSDE}
    Y_t=\xi+\int_t^T f(s,Y_s,Z_s,\E[Y_s|\mathcal{G}_s],\E[Z_s|\mathcal{G}_s])ds-\int_t^T Z_s dB_s.
\end{equation}
As the driver $f$ depends on the conditional expectation of the solution, we call it conditional expectation BSDE.  It is worth pointing out that we do not need to assume that the filtration $\mathbb{G}$ satisfies the ``usual" conditions, i.e., that $\mathcal{G}_0$ contains all $\P$-null set and that $\mathbb{G}$ is right-continuous. Fortunately, we will not encounter any measurability issue under this weak assumption. Indeed,  based on the existence of a progressively measurable modification of a measurable adapted process (see Theorem 1 and Theorem 2 in \cite{KP}), given a pair of measurable processes $(Y,Z)$, the driver $f$ in \eqref{CEBSDE} is progressively measurable. The uniqueness of the solutions to the conditional expectation BSDEs can be obtained by a priori estimates. By constructing a contraction mapping, we establish the existence result. Moreover, the multi-dimensional comparison theorem is obtained under some additional structure of the driver $f$. We should point out that this structure is necessary since we may find some counterexamples in \cite{BLP} for the mean-field BSDEs, which serve as a special kind of conditional expectation BSDEs when the subfiltration is trivial (i.e., $\mathcal{G}_t=\mathcal{F}_0$ for all $t\in[0,T]$). We also obtain the comparison property for the conditional expectation of the solutions. Actually, a similar type of BSDEs \eqref{CEBSDE}, called BSDEs with filtering, has been studied when considering the optimal control problem with partial information (see \cite{LYY,WWY}). However, it should be pointed out that in \cite{LYY,WWY}, the Brownian motion is assumed to be $2$-dimensional, written as $(B^1_t,B^2_t)$, and  the filtration $\mathbb{G}$ is generated by $B^1$. The natural question is whether the BSDE with filtering is well-posed or not when the filtration $\mathbb{G}$ is arbitrarily given. This is one of the reasons why we consider the conditional expectation BSDE.

Clearly, the conditional reflected BSDE \eqref{nonlinearyz} will degenerate into the classical reflected BSDE (resp., the mean reflected BSDE) when the subfiltration $\mathbb{G}$ coincides with the original filtration $\mathbb{F}$ (resp., when the subfiltration $\mathbb{G}$ is trivial and the loss function is linear). For the reflected BSDEs and mean reflected BSDEs, there are two effective methods to construct the solution: the contraction mapping method and the approximation via penalization (see \cite{BEH,CK,KKPPQ,Li}). In this paper, we provide an alternative approach to construct the solution to the conditional reflected BSDEs by the penalization method. This is another motivation to study the conditional expectation BSDEs. The construction is of independent interest and may be useful for the numerical approximation for  conditional reflected BSDEs. Moreover, compared to \cite{HHL}, the advantage is that we do not need to assume that subfiltration $\mathbb{G}$ is left-quasi-continuous (see Remark \ref{remark3.6} below).  

The paper is organized as follows. In Section 2, we establish the well-posedness as well as the comparison theorem for the conditional expectation BSDEs. Then, utilizing the penalization method, we construct the solution to the conditional reflected BSDEs in Section 3.

\section{Conditional expectation BSDEs}


Fix a finite time horizon $T>0$. Let $(\Omega,\mathcal{F},\mathbb{F},\P)$ be a filtered probability space such that $\mathbb{F}=\{\mathcal{F}_t\}_{t\in[0,T]}$ satisfies the usual conditions of right continuity and completeness. Let $B$ be a standard $d$-dimensional Brownian motion and let $\mathbb{G}=\{\mathcal{G}_t\}_{t\in[0,T]}$ be a given  subfiltration, that is, $\mathcal{G}_t\subset \mathcal{F}_t$ for any $t\in[0,T]$ and $\mathcal{G}_s\subset\mathcal{G}_t$ for any $0\leq s\leq t\leq T$. We first introduce the following spaces, which will be frequently used in the sequel. 

\begin{itemize}
\item $L^2(\mathcal{F}_t;\mathbb{R}^n)$: the set of $\mathbb{R}^n$-valued $\mathcal{F}_t$-measurable random variable $\xi$ such that $\E[|\xi|^2]<\infty$.
\item $\mathcal{S}^2_n$: the set of $\mathbb{R}^n$-valued adapted continuous processes $Y$ on $[0,T]$ such that $$\E\left[\sup_{t\in[0,T]}|Y_t|^2\right]<\infty.$$
\item $\mathcal{H}^2_{nd}$: the set of $\mathbb{R}^{n\times d}$-valued predictable processes $Z$ such that $$\E\left[\int_0^T|Z_t|^2dt\right]<\infty.$$
\item If $n=1$, $L^2(\mathcal{F}_t;\mathbb{R}^n)$, $\mathcal{S}^2_n$ and $\mathcal{H}^2_{nd}$ will be denoted by $L^2(\mathcal{F}_t)$,  $\mathcal{S}^2$ and $\mathcal{H}^2_d$, respectively.
\end{itemize}

This section is devoted to the study of the conditional expectation BSDEs \eqref{CEBSDE}. 
In order to obtain the well-posedness of the conditional exepctation BSDEs, we need to propose the following conditions for the driver $f$. 

\begin{assumption}\label{assf}
    The driver $f$ is a map from $\Omega\times[0,T]\times \mathbb{R}^n\times\mathbb{R}^{n\times d}\times\mathbb{R}^n\times\mathbb{R}^{n\times d}$ to $\mathbb{R}^n$. For 
each fixed $(y,z,y',z')$, $f(\cdot,\cdot,y,z,y',z')$ is $\mathbb{F}$-progressively measurable. There exists a constant $\lambda>0$ such that for any $t\in[0,T]$ and any $y_i,y'_i\in\mathbb{R}^n$, $z_i,z'_i\in\mathbb{R}^{n\times d}$, $i=1,2$
\begin{align*}
|f(t,y_1,z_1,y'_1,z'_1)-f(t,y_2,z_2,y'_2,z'_2)|\leq \lambda(|y_1-y_2|+|z_1-z_2|+|y'_1-y'_2|+|z'_1-z'_2|)
\end{align*}
and 
\begin{align*}
\E\left[\int_0^T |f(t,0,0,0,0)|^2dt\right]<\infty.
\end{align*}
\end{assumption}

\begin{remark}
    Suppose that $\mathbb{G}$ degenerates into the deterministic scenario, i.e., $\mathcal{G}_t=\mathcal{F}_0$ for any $t\in[0,T]$. Then the conditional expectation BSDE \eqref{CEBSDE} turns into the mean-field BSDE studied in \cite{BLP}. If $\mathbb{G}=\mathbb{F}$, then the conditional expectation BSDE corresponds to the classical BSDE (see \cite{PP}).
\end{remark}

Throughout this paper,  the conditional expectation of a multi-dimensional random variable is taken under each component. A  solution to the conditional expectation BSDE with terminal value $\xi$ and driver $f$ is a pair of processes $(Y,Z)\in \mathcal{S}^2_n\times\mathcal{H}^2_{nd}$ satisfying \eqref{CEBSDE}. Before investigating the existence and uniqueness of the solution, we need to make sure that the stochastic integral 
\begin{align*}
    \int_0^t f(s,Y_s,Z_s,\E[Y_s|\mathcal{G}_s],\E[Z_s|\mathcal{G}_s])ds
\end{align*}
is well-defined. 
To this end, we need the following definitions and propositions concerning some measurability issue for stochastic processes. Roughly speaking, a measurable adapted process has a progressively measurable modification and can be approximated by a sequence of continuous measurable processes.

\begin{definition}
    An $\bar{\mathbb{R}}$-valued process $X$ is called measurable if the mapping $X:[0,T]\times\Omega\rightarrow\bar{\mathbb{R}}$ is measurable. It is called $\P$-a.s. uniformly sample continuous if there exists a $\P$-null set $N\in\mathcal{F}$ such that for any $\omega\in N^c$, the mapping $t\rightarrow X_t(\omega)$ is uniformly continuous.
\end{definition}

\begin{proposition}[\cite{KP}]\label{measurability}
    Let $X$ be an $\bar{\mathbb{R}}$-valued measurable process adapted to a filtration $\mathbb{G}$. Then it has a $\mathbb{G}$-progressively measurable modification. Moreover, suppose that 
    \begin{align*}
        \E\left[\int_0^T|X_t|^2dt\right]<\infty.
    \end{align*}
    Then, there exists a sequence $\{X^n\}_{n\in\mathbb{N}}$ of bounded $\mathbb{G}$-adapted processes which are $\P$-a.s. uniformly sample continuous, such that 
    \begin{align*}
        \lim_{n\rightarrow \infty} \E\left[\int_0^T|X_t-X^n_t|^2dt\right]=0.
    \end{align*}
\end{proposition}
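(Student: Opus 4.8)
The plan is to treat the two assertions separately, since the approximation statement follows cleanly once a progressively measurable version is in hand. For the existence of a modification I would first reduce to bounded processes: writing $X^{(m)}=(-m)\vee X\wedge m$, one has $X^{(m)}_t(\omega)\to X_t(\omega)$ for every $(t,\omega)$, so if each bounded $X^{(m)}$ admits a $\mathbb{G}$-progressively measurable modification $\tilde X^{(m)}$, then $\limsup_m \tilde X^{(m)}$ is again progressively measurable and is a modification of $X$. It therefore suffices to treat a bounded measurable $\mathbb{G}$-adapted process $X$.

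For such $X$ the key device is left-averaging. I would set
\[
X^n_t=n\int_{(t-1/n)^+}^{t}X_s\,\d s.
\]
Since $X$ is jointly measurable, every time-section $s\mapsto X_s(\omega)$ is Borel measurable and bounded, so $X^n_t(\omega)$ is well defined for each $\omega$ and the path $t\mapsto X^n_t(\omega)$ is Lipschitz, hence continuous. Adaptedness of $X$ together with $\mathcal{G}_s\subset\mathcal{G}_t$ for $s\le t$ shows (via Fubini) that $X^n_t$ is $\mathcal{G}_t$-measurable, so each $X^n$ is a continuous adapted process, and therefore $\mathbb{G}$-progressively measurable. By the Lebesgue differentiation theorem applied to each path, $X^n_t(\omega)\to X_t(\omega)$ for Lebesgue-a.e. $t$ and $\P$-a.e. $\omega$; by Fubini this holds for $(\d t\otimes\d\P)$-a.e. $(t,\omega)$. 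Hence $\tilde X:=\limsup_{n}X^n$ is $\mathbb{G}$-progressively measurable and coincides with $X$ off a $(\d t\otimes\d\P)$-null set.

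The main obstacle is upgrading this ``$(\d t\otimes\d\P)$-a.e.'' identity to a genuine modification, that is, $\tilde X_t=X_t$ $\P$-a.s. for \emph{every} fixed $t$ rather than for Lebesgue-a.e. $t$. The averages $X^n_t$ need not converge to $X_t$ at an individual instant, since a general measurable process carries no sample-path or $L^2$ regularity in time, and the exceptional set of times may a priori carry positive probability for each $t$ in a Lebesgue-null set. This is exactly where the absence of the usual conditions on $\mathbb{G}$ bites: one cannot simply redefine $\tilde X$ on the bad time-set and appeal to completeness to preserve measurability. I would resolve it by a Chung--Doob type refinement that controls the set of bad instants and selects the adapted value $X_t$ there in a measurable way; this is the content of Theorem~1 in \cite{KP} and may be quoted directly.

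The approximation statement then follows without the modification subtlety, by working with the progressively measurable version $\tilde X$ (which equals $X$ a.e.\ and is thus interchangeable with $X$ inside $\E\int_0^T|\cdot|^2\,\d t$). Under the hypothesis $\E\int_0^T|X_t|^2\,\d t<\infty$, I would first truncate, $\tilde X^{(m)}=(-m)\vee \tilde X\wedge m$, so that $\E\int_0^T|\tilde X_t-\tilde X^{(m)}_t|^2\,\d t\to 0$ by dominated convergence, reducing to bounded $\tilde X$. Left-averaging once more, the processes $X^n_t=n\int_{(t-1/n)^+}^{t}\tilde X_s\,\d s$ are bounded by the same bound as $\tilde X$, are $\mathbb{G}$-adapted, and have Lipschitz paths for every $\omega$, hence are $\P$-a.s.\ uniformly sample continuous. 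Since $|X^n_t|$ is uniformly bounded and $X^n_t\to\tilde X_t$ for $(\d t\otimes\d\P)$-a.e.\ $(t,\omega)$, dominated convergence gives $\E\int_0^T|\tilde X_t-X^n_t|^2\,\d t\to 0$, and a diagonal argument merges the truncation level $m$ and the averaging index $n$ into a single approximating sequence with the required properties.
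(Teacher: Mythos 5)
The paper offers no proof of this proposition at all: it is imported verbatim from \cite{KP} (Theorems 1 and 2 there), so your decision to quote Theorem 1 of \cite{KP} for the existence of the progressively measurable modification matches the paper's treatment of that half exactly, and your derivation of the approximation statement from it --- truncation, left-averaging $X^n_t=n\int_{(t-1/n)^+}^{t}\tilde X_s\,ds$, Lipschitz (hence uniformly continuous) paths, bounded convergence on the finite measure space $[0,T]\times\Omega$, and a diagonal extraction over the truncation level and the averaging index --- is correct, and is in substance how the second assertion is obtained in \cite{KP}. The truncation-and-$\limsup$ reduction to bounded processes in the first half is also sound.

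One caveat about your motivating sketch for the modification result: the claim that, for a merely measurable and adapted $X$, the average $X^n_t$ is $\mathcal{G}_t$-measurable ``via Fubini'' is not justified. Joint measurability only makes the restriction of $X$ to $[0,t]\times\Omega$ measurable with respect to $\mathcal{B}([0,t])\otimes\mathcal{F}$, and pointwise-in-$s$ adaptedness does not pass through the time integral; measurability of that restriction with respect to $\mathcal{B}([0,t])\otimes\mathcal{G}_t$ is precisely progressive measurability, i.e.\ the conclusion being sought. Classically this step is repaired by the usual conditions (the integral is $\mathcal{G}_t$-measurable only modulo a null set, which the augmentation absorbs), and that is unavailable in the present setting --- this, in addition to the a.e.-in-$t$ versus every-$t$ issue you do flag, is why Theorem 1 of \cite{KP} cannot be recovered by averaging alone. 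Since you ultimately quote that theorem rather than rely on the sketch, your argument as structured stands; in the second half, where $\tilde X$ is already progressive, the same Fubini step is legitimate and your proof is complete.
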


\begin{remark}\label{r1}
  (i) If the driver $f$ does not depend on the last two component $(y',z')$ and satisfies Assumption \ref{assf}, we call $f$ satisfies the usual conditions.
  
 \noindent (ii) Given $(U,V)\in \mathcal{S}^2_n\times\mathcal{H}^2_{nd}$, by Assumption \ref{assf} and Proposition \ref{measurability}, the following  integral 
    \begin{align*}
    \int_0^t f(s,U_s,V_s,\E[U_s|\mathcal{G}_s],\E[V_s|\mathcal{G}_s])ds
\end{align*}
is well-defined.  Now, we define
\begin{align*}
        f^{U,V}(s,y,z):=f(s,y,z,\E[U_s|\mathcal{G}_s],\E[V_s|\mathcal{G}_s]).
    \end{align*}
By Assumption \ref{assf} and Proposition \ref{measurability}, it is easy to check that $f$ satisfies the usual conditions. 
\end{remark}

We first provide some a priori estimates for the solutions to the  conditional expectation BSDEs.

\begin{proposition}\label{aprioriestimate}
    Given $\xi^i\in L^2(\mathcal{F}_T;\mathbb{R}^n)$ and $f^i$ satisfying Assumption \ref{assf}, let $(Y^i,Z^i)$ be the solution to the conditional expectation BSDE with terminal value $\xi^i$ and driver $f^i$, $i=1,2$. Set $\hat{Y}_t=Y^1_t-Y^2_t$, $\hat{Z}_t=Z^1_t-Z^2_t$ and 
    \begin{align*}
        \hat{f}^2_s=f^1\left(s,Y^2_s,Z^2_s,\E\left[Y^2_s\big|\mathcal{G}_s\right],\E\left[Z^2_s\big|\mathcal{G}_s\right]\right)-f^2\left(s,Y^2_s,Z^2_s,\E\left[Y^2_s\big|\mathcal{G}_s\right],\E\left[Z^2_s\big|\mathcal{G}_s\right]\right).
    \end{align*}
    Then, there exists a constant $C$ depending on $\lambda, T$, such that 
    \begin{align*}
        \E\left[\sup_{t\in[0,T]}|\hat{Y}_t|^2\right]+\E\left[\int_0^T |\hat{Z}_s|^2 ds\right]\leq C\left(\E\left[|\hat{\xi}|^2\right]+\E\left[\int_0^T |\hat{f}^2_s|^2 ds\right]\right).
    \end{align*}
\end{proposition}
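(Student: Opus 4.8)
The plan is to run the standard a priori estimate for Lipschitz BSDEs, with the only new ingredient being the control of the two conditional-expectation arguments of the driver. First I would write down the equation solved by the pair $(\hat Y,\hat Z)$, namely
\begin{align*}
\hat Y_t=\hat\xi+\int_t^T \Delta f_s\,ds-\int_t^T \hat Z_s\,dB_s,
\end{align*}
where $\Delta f_s:=f^1(s,Y^1_s,Z^1_s,\E[Y^1_s|\mathcal{G}_s],\E[Z^1_s|\mathcal{G}_s])-f^2(s,Y^2_s,Z^2_s,\E[Y^2_s|\mathcal{G}_s],\E[Z^2_s|\mathcal{G}_s])$. I would then split this difference by inserting the intermediate term $f^1(s,Y^2_s,Z^2_s,\E[Y^2_s|\mathcal{G}_s],\E[Z^2_s|\mathcal{G}_s])$, so that $\Delta f_s=\delta f_s+\hat f^2_s$, where $\hat f^2_s$ is exactly the quantity in the statement and, by the Lipschitz hypothesis in Assumption \ref{assf} applied to $f^1$,
\begin{align*}
|\delta f_s|\leq \lambda\big(|\hat Y_s|+|\hat Z_s|+|\E[\hat Y_s|\mathcal{G}_s]|+|\E[\hat Z_s|\mathcal{G}_s]|\big).
\end{align*}

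Next I would apply It\^o's formula to $|\hat Y_t|^2$ on $[t,T]$ and take expectations. The stochastic integral $\int \hat Y_s^\top\hat Z_s\,dB_s$ is a true martingale of zero expectation, since $(\hat Y,\hat Z)\in\mathcal{S}^2_n\times\mathcal{H}^2_{nd}$ and hence $\E[(\int_0^T|\hat Y_s^\top\hat Z_s|^2ds)^{1/2}]\leq(\E[\sup_t|\hat Y_t|^2])^{1/2}(\E[\int_0^T|\hat Z_s|^2ds])^{1/2}<\infty$. This yields
\begin{align*}
\E[|\hat Y_t|^2]+\E\Big[\int_t^T|\hat Z_s|^2\,ds\Big]=\E[|\hat\xi|^2]+2\,\E\Big[\int_t^T \hat Y_s\cdot\Delta f_s\,ds\Big].
\end{align*}
The cross term is bounded using the estimate for $|\delta f_s|$ together with Young's inequality: for a small parameter $\varepsilon>0$ the contribution of $|\hat Z_s|$ produces a term $\varepsilon\,\E[\int_t^T|\hat Z_s|^2\,ds]$ that is later absorbed into the left-hand side, the contribution of $\hat f^2_s$ is estimated by $\E[\int_t^T|\hat f^2_s|^2\,ds]$ plus a multiple of $\E[\int_t^T|\hat Y_s|^2\,ds]$, and the remaining pieces feed into the $\hat Y$ and conditional-expectation terms.

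The key step, and the only place where the conditional-expectation structure enters, is the treatment of $\E[\hat Y_s|\mathcal{G}_s]$ and $\E[\hat Z_s|\mathcal{G}_s]$. By the conditional Jensen inequality applied componentwise, $|\E[\hat Y_s|\mathcal{G}_s]|^2\leq \E\big[|\hat Y_s|^2\,\big|\,\mathcal{G}_s\big]$, and taking expectations with the tower property gives $\E[|\E[\hat Y_s|\mathcal{G}_s]|^2]\leq \E[|\hat Y_s|^2]$, and likewise $\E[|\E[\hat Z_s|\mathcal{G}_s]|^2]\leq \E[|\hat Z_s|^2]$. Thus, after pairing with $|\hat Y_s|$ and applying Young's inequality, these conditional-expectation terms are controlled (in expectation) by $\E[\int_t^T|\hat Y_s|^2\,ds]$ and, at the cost of another small $\varepsilon$, by $\E[\int_t^T|\hat Z_s|^2\,ds]$, so they behave exactly like the genuine $\hat Y,\hat Z$ contributions. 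Choosing $\varepsilon$ small enough to absorb all the $\hat Z$ terms on the left and then applying Gronwall's inequality yields a bound for $\sup_{t\in[0,T]}\E[|\hat Y_t|^2]$ together with $\E[\int_0^T|\hat Z_s|^2\,ds]$ in terms of $\E[|\hat\xi|^2]+\E[\int_0^T|\hat f^2_s|^2\,ds]$.

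Finally, to upgrade $\sup_t\E[|\hat Y_t|^2]$ to $\E[\sup_t|\hat Y_t|^2]$ I would return to the equation, take the supremum over $t$, and estimate the martingale term $\sup_t|\int_t^T\hat Z_s\,dB_s|$ by the Burkholder--Davis--Gundy inequality, which introduces $\E[(\int_0^T|\hat Z_s|^2\,ds)^{1/2}]$; a further Young inequality combined with the bound on $\E[\int_0^T|\hat Z_s|^2\,ds]$ already obtained closes the estimate and fixes the constant $C=C(\lambda,T)$. I expect the only delicate bookkeeping to be the choice of $\varepsilon$ and the Gronwall constant so that every $\hat Z$ contribution (including the one coming from $\E[\hat Z_s|\mathcal{G}_s]$) is absorbed on the left; conceptually, the conditional Jensen inequality is precisely what reduces the argument to the classical Lipschitz case, so I anticipate no essential difficulty beyond this.
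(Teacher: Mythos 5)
Your proposal is correct and follows essentially the same route as the paper: It\^o's formula applied to $|\hat Y|^2$, splitting the driver difference via the intermediate term $f^1(s,Y^2_s,Z^2_s,\E[Y^2_s|\mathcal{G}_s],\E[Z^2_s|\mathcal{G}_s])$, conditional Jensen plus the tower property to reduce the $\E[\,\cdot\,|\mathcal{G}_s]$ terms to ordinary $\hat Y,\hat Z$ contributions, absorption of the $\hat Z$ terms by Young's inequality, and finally Burkholder--Davis--Gundy to pass from $\sup_t\E[|\hat Y_t|^2]$ to $\E[\sup_t|\hat Y_t|^2]$. The only cosmetic difference is that you close the estimate with Gronwall's inequality where the paper instead uses the exponential weight $e^{\beta t}$ with $\beta=4\lambda+8\lambda^2+2$; these are interchangeable.
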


\begin{proof}
    Set
    \begin{align*}
        \hat{f}_s=f^1\left(s,Y^1_s,Z^1_s,\E\left[Y^1_s\big|\mathcal{G}_s\right],\E\left[Z^1_s\big|\mathcal{G}_s\right]\right)-f^2\left(s,Y^2_s,Z^2_s,\E\left[Y^2_s\big|\mathcal{G}_s\right],\E\left[Z^2_s\big|\mathcal{G}_s\right]\right).
    \end{align*}
    Applying It\^{o}'s formula to $e^{\beta t}|\hat{Y}|^2_t$ and taking conditional expectations on both sides, we obtain that 
    \begin{align*}
        &\E\left[e^{\beta t}|\hat{Y}_t|^2\right]+\beta\E\left[\int_t^T e^{\beta s}|\hat{Y}_s|^2 ds\right]+\E\left[\int_t^T e^{\beta s}|\hat{Z}_s|^2 ds\right]
        = \E\left[e^{\beta T}|\hat{\xi}|^2\right]+\E\left[\int_t^T e^{\beta s}2\hat{Y}_s\hat{f}_sds\right]\\
        \leq &\E\left[e^{\beta T}|\hat{\xi}|^2\right]+\E\left[\int_t^T e^{\beta s}2|\hat{Y}_s|\left(|\hat{f}^2_s|+\lambda\left(|\hat{Y}_s|+|\hat{Z}_s|+\E\left[|\hat{Y}_s|\big|\mathcal{G}_s\right]+\E\left[|\hat{Z}_s|\big|\mathcal{G}_s\right]\right)\right)ds\right]\\
        \leq &\E\left[e^{\beta T}|\hat{\xi}|^2\right]+\E\left[\int_t^T e^{\beta s}|\hat{f}^2_s|^2 ds\right]+(2\lambda+4\lambda^2+1)\E\left[\int_t^T e^{\beta s}|\hat{Y}_s|^2 ds\right]+\frac{1}{4}\E\left[\int_t^T e^{\beta s}|\hat{Z}_s|^2 ds\right]\\
        &+\E\left[\int_t^T e^{\beta s}2\lambda|\hat{Y}_s|\E\left[|\hat{Y}_s|\big|\mathcal{G}_s\right] ds\right]+\E\left[\int_t^T e^{\beta s}2\lambda|\hat{Y}_s|\E\left[|\hat{Z}_s|\big|\mathcal{G}_s\right] ds\right].
    \end{align*}
    It is easy to check that 
    \begin{align*}
        \E\left[\int_t^T e^{\beta s}2\lambda|\hat{Y}_s|\E\left[|\hat{Y}_s|\big|\mathcal{G}_s\right] ds\right]=\int_t^T e^{\beta s}2\lambda\E\left[\left(\E\left[|\hat{Y}_s|\big|\mathcal{G}_s\right]\right)^2\right] ds\leq 2\lambda \E\left[\int_t^T e^{\beta s}|\hat{Y}_s|^2 ds\right]
    \end{align*}
    and 
    \begin{align*}
       &\E\left[\int_t^T e^{\beta s}2\lambda|\hat{Y}_s|\E\left[|\hat{Z}_s|\big|\mathcal{G}_s\right] ds\right]\\
       =& \int_t^T e^{\beta s}2\lambda\E\left[\E\left[|\hat{Y}_s|\big|\mathcal{G}_s\right]\E\left[|\hat{Z}_s|\big|\mathcal{G}_s\right]\right] ds\\
       \leq &\int_t^T e^{\beta s}2\lambda \left(\E\left[\left(\E\left[|\hat{Y}_s|\big|\mathcal{G}_s\right]\right)^2\right]\right)^{1/2}\left(\E\left[\left(\E\left[|\hat{Z}_s|\big|\mathcal{G}_s\right]\right)^2\right]\right)^{1/2}ds\\
       \leq &4\lambda^2\E\left[\int_t^T e^{\beta s}|\hat{Y}_s|^2 ds\right]+\frac{1}{4}\E\left[\int_t^T e^{\beta s}|\hat{Z}_s|^2 ds\right].
    \end{align*}
    Then, setting $\beta=4\lambda+8\lambda^2+2$, all the above analysis indicates that there exists a constant $C$ depending on $\lambda,T$, such that for any $t\in[0,T]$, we have
    \begin{align*}
        \E\left[|\hat{Y}_t|^2\right]+\E\left[\int_t^T |\hat{Y}_s|^2 ds\right]+\E\left[\int_t^T |\hat{Z}_s|^2 ds\right]\leq C\left(\E\left[|\hat{\xi}|^2\right]+\E\left[\int_t^T |\hat{f}^2_s|^2 ds\right]\right).
    \end{align*}
    Applying the above estimate, the H\"older inequality and Burkholder-Davis-Gundy's inequality, we finally obtain the estimate for $\E[\sup_{t\in[0,T]}|\hat{Y}_t|^2]$. The proof is complete.
\end{proof}

Now, we are ready to present the main result in this section.

\begin{theorem}\label{wellposedness}
    Given $\xi\in L^2(\mathcal{F}_T;\mathbb{R}^n)$, suppose that the driver $f$ satisfies Assumption \ref{assf}. Then, the conditional expectation BSDE \eqref{CEBSDE} admits a unique solution $(Y,Z)\in \mathcal{S}^2_n\times\mathcal{H}^2_{nd}$. 
\end{theorem}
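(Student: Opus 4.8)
The plan is to read off uniqueness directly from Proposition \ref{aprioriestimate} and to obtain existence by a Banach fixed point argument in a weighted norm, the crucial structural point being that conditional expectation is non-expansive in $L^2$. Uniqueness is immediate: if $(Y^1,Z^1)$ and $(Y^2,Z^2)$ both solve \eqref{CEBSDE} with the same $\xi$ and the same $f$, then applying Proposition \ref{aprioriestimate} with $f^1=f^2=f$ and $\xi^1=\xi^2=\xi$ makes both $\hat f^2_s$ and $\hat\xi$ vanish identically, so the right-hand side of the estimate is $0$ and therefore $\E[\sup_{t}|\hat Y_t|^2]+\E[\int_0^T|\hat Z_s|^2\,ds]=0$, i.e. the two solutions coincide.

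For existence, let $\mathbb{H}_\beta$ be the space of pairs $(U,V)$ of predictable processes, with $U$ being $\mathbb{R}^n$-valued and $V$ being $\mathbb{R}^{n\times d}$-valued, equipped with the norm $\|(U,V)\|_\beta^2=\E[\int_0^T e^{\beta s}(|U_s|^2+|V_s|^2)\,ds]$; this is a Hilbert space and hence complete for each $\beta>0$. Given $(U,V)\in\mathbb{H}_\beta$, Remark \ref{r1} guarantees that $f^{U,V}(s,y,z)=f(s,y,z,\E[U_s|\mathcal{G}_s],\E[V_s|\mathcal{G}_s])$ is a genuine driver satisfying the usual conditions, so the classical theory of Pardoux and Peng \cite{PP} provides a unique solution $(Y,Z)\in\mathcal{S}^2_n\times\mathcal{H}^2_{nd}$ of the BSDE with terminal value $\xi$ and driver $f^{U,V}$. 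Setting $\Phi(U,V):=(Y,Z)$ defines a map of $\mathbb{H}_\beta$ into itself whose fixed points are exactly the solutions of \eqref{CEBSDE}.

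The heart of the argument is to show that $\Phi$ is a contraction for a suitable $\beta$. Taking two inputs $(U^i,V^i)$ with images $(Y^i,Z^i)$ and writing $\hat U,\hat V,\hat Y,\hat Z$ for the differences, I would apply It\^o's formula to $e^{\beta s}|\hat Y_s|^2$ and take expectations. The difference of the drivers splits into a Lipschitz part bounded by $\lambda(|\hat Y_s|+|\hat Z_s|)$ and a source part $f(s,Y^2_s,Z^2_s,\E[U^1_s|\mathcal{G}_s],\E[V^1_s|\mathcal{G}_s])-f(s,Y^2_s,Z^2_s,\E[U^2_s|\mathcal{G}_s],\E[V^2_s|\mathcal{G}_s])$, which Assumption \ref{assf} bounds by $\lambda(|\E[\hat U_s|\mathcal{G}_s]|+|\E[\hat V_s|\mathcal{G}_s]|)$. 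Estimating the cross terms by Young's inequality with a small weight on the source, so that it carries a coefficient strictly below $1$, and bounding $\E[|\E[\hat U_s|\mathcal{G}_s]|^2]\leq\E[|\hat U_s|^2]$ by Jensen's inequality, I would then choose $\beta$ large enough to absorb all the $|\hat Y_s|^2$ terms on the left. This yields $\|(\hat Y,\hat Z)\|_\beta^2\leq\kappa\|(\hat U,\hat V)\|_\beta^2$ with $\kappa<1$, so the Banach fixed point theorem produces a unique fixed point $(Y,Z)$; since $Y$ is the first component of a classical BSDE solution it lies in $\mathcal{S}^2_n$, giving $(Y,Z)\in\mathcal{S}^2_n\times\mathcal{H}^2_{nd}$.

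The main obstacle is obtaining a contraction constant genuinely below $1$ uniformly in $\lambda$: this is where the two degrees of freedom, the Young weight (which shrinks the source coefficient) and the parameter $\beta$ (which dominates the linear $|\hat Y_s|^2$ growth), must be balanced, with the non-expansiveness of conditional expectation furnished by Jensen's inequality as the indispensable ingredient. By contrast, the measurability difficulties that might arise because $\mathbb{G}$ need not satisfy the usual conditions are already neutralized by Proposition \ref{measurability} and Remark \ref{r1}, so they do not enter the fixed point argument.
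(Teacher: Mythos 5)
Your proposal is correct and follows essentially the same route as the paper: uniqueness read off from Proposition \ref{aprioriestimate}, and existence via the map $(U,V)\mapsto(Y^{U,V},Z^{U,V})$ through the frozen-driver classical BSDE of Pardoux--Peng, shown to be a contraction in the $e^{\beta s}$-weighted norm using Jensen's inequality for the conditional expectation terms and a large $\beta$ (the paper takes $\beta=16\lambda^2+4\lambda+1$, making the source coefficient $4\lambda^2/\beta$ small exactly as in your Young-weight balancing). The only cosmetic difference is that you phrase the absorption argument abstractly while the paper carries out the explicit constants, and both recover $Y\in\mathcal{S}^2_n$ from the classical BSDE solution at the fixed point.
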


\begin{proof}
Uniqueness is a direct consequence of Proposition \ref{aprioriestimate}. We are in the position to show the existence. In this proof, we use the notations in Remark \ref{r1}.  Given $(U,V)\in \mathcal{S}^2_n\times\mathcal{H}^2_{nd}$,  by Remark \ref{r1} and the result in \cite{PP}, the following BSDE
    \begin{align*}
        Y^{U,V}_t=\xi+\int_t^T f(s,Y^{U,V}_s,Z^{U,V}_s,\E[U_s|\mathcal{G}_s],\E[V_s|\mathcal{G}_s])ds-\int_t^T Z^{U,V}_s dB_s
    \end{align*}
 admits a unique solution $(Y^{U,V},Z^{U,V})\in\mathcal{S}^2_n\times \mathcal{H}^2_{nd}$. We define the mapping $I:\mathcal{H}^2_n\times\mathcal{H}^2_{nd}\rightarrow\mathcal{H}^2_{n}\times\mathcal{H}^2_{nd}$ as follows
    \begin{align*}
        I(U,V)=(Y^{U,V},Z^{U,V}).
    \end{align*}
    Now, for any given $(U^i,V^i)\in \mathcal{H}^2_n\times\mathcal{H}^2_{nd}$, we write $(Y^i,Z^i)=I(U^i,Z^i)$, $i=1,2$. Set $\hat{X}_t=X^1_t-X^2_t$ for $X=U,V,Y,Z$. Applying It\^{o}'s formula to $e^{\beta t}|\hat{Y}|^2_t$ and taking conditional expectations w.r.t. $\mathcal{F}_t$, we have
    \begin{align*}
        &e^{\beta t}|\hat{Y}_t|^2+\beta\E\left[\int_t^T e^{\beta s}|\hat{Y}_s|^2 ds\Big|\mathcal{F}_t\right]+\E\left[\int_t^T e^{\beta s}|\hat{Z}_s|^2 ds\Big|\mathcal{F}_t\right]\\
        = &\E\left[\int_t^T e^{\beta s}2\hat{Y}_s\left(f^{U^1,V^1}(s,Y^1_s,Z^1_s)-f^{U^2,V^2}(s,Y^2_s,Z^2_s)\right)ds\Big|\mathcal{F}_t\right].
    \end{align*}
    By Assumption \ref{assf}, it is easy to check that 
    \begin{align*}
        &\int_t^T e^{\beta s}2\hat{Y}_s\left(f^{U^1,V^1}(s,Y^1_s,Z^1_s)-f^{U^2,V^2}(s,Y^2_s,Z^2_s)\right)ds\\
        \leq &(2\lambda+2\lambda^2+\frac{\beta}{2})\int_t^T e^{\beta s}|\hat{Y}_s|^2ds+\frac{1}{2}\int_t^T e^{\beta s}|\hat{Z}_s|^2ds\\
        &+\frac{4\lambda^2}{\beta}\left(\int_t^T e^{\beta s}\E\left[|\hat{U}_s|^2\big|\mathcal{G}_s\right]ds+\int_t^T e^{\beta s}\E\left[|\hat{V}_s|^2\big|\mathcal{G}_s\right]ds\right).
    \end{align*}
    Combining the above two equations and taking expectations yield that for any $t\in[0,T]$
    \begin{align*}
        &\left(\frac{\beta}{2}-2\lambda-2\lambda^2\right)\E\left[\int_t^T e^{\beta s}|\hat{Y}_s|^2 ds\right]+\frac{1}{2}\E\left[\int_t^T e^{\beta s}|\hat{Z}_s|^2 ds\right]\\
        \leq &\frac{4\lambda^2}{\beta}\left(\E\left[\int_t^T e^{\beta s}|\hat{U}_s|^2ds\right]+\E\left[\int_t^T e^{\beta s}|\hat{V}_s|^2ds\right]\right).
    \end{align*}
    Choosing $\beta=16\lambda^2+4\lambda+1$ and $t=0$, we construct a contraction mapping $I$ on $\mathcal{H}^2_n\times\mathcal{H}^2_{nd}$ endowed with the norm $\|\cdot\|_\beta$, where 
    \begin{align*}
        \|(U,V)\|_\beta^2=\E\left[\int_0^T e^{\beta s}|\hat{U}_s|^2ds\right]+\E\left[\int_0^T e^{\beta s}|\hat{V}_s|^2ds\right].
    \end{align*}
    Therefore, there exists a unique fixed point $(Y,Z)\in \mathcal{H}^2_n\times\mathcal{H}^2_{nd}$ such that $I(Y,Z)=(Y,Z)$. Moreover, by H\"{o}lder's inequality and  Burkholder-Davis-Gundy's inequality, the first component of $I(Y,Z)$ belongs to $\mathcal{S}^2_n$. The proof is complete.
\end{proof}

\begin{remark}
    Suppose that $B=(B^1,B^2)$ is a $2$-dimensional Brownian motion and the subfiltration is generated by $B^1$, i.e., for any $t\in[0,T]$, $\mathcal{G}_t=\mathcal{F}^{B^1}_t$. Then, Theorem \ref{wellposedness} degenerates into Lemma 4.1 in \cite{WWY}.
\end{remark}

One of the most important results in the theory of BSDE is the comparison theorem. Recall that Example 3.1 and Example 3.2 in \cite{BLP} indicates that the comparison theorem for mean-field BSDEs may not hold when  both drivers depend on the expectation of $Z$-term or are decreasing with respect to the expectation of $Y$-term. In order to derive the comparison theorem in the present framework, some additional structure of the driver should be imposed. Motivated by \cite{LXP}, we propose the following assumptions. 

\begin{itemize}
    \item[(H1)] For each fixed $j=1,2,\cdots,n$, and for all $(\omega,t,y',z')\in\Omega\times[0,T]\times\mathbb{R}^n\times\mathbb{R}^{n\times d}$, $y^1,y^2\in\mathbb{R}^n$, $z^1,z^2\in\mathbb{R}^{n\times d}$ with $y^1_j=y^2_j$, $z^1_j=z^2_j$, $y^1_l\leq y^2_l$, $l\neq j$, we have
    \begin{align*}
        f^1_j(\omega,t,y^1,z^1,y',z')\leq f^2_j(\omega,t,y^2,z^2,y',z').
    \end{align*}
    \item[(H2)] For each fixed $j=1,2,\cdots,n$, for all $y',y''\in\mathbb{R}^n$, and all $(t,y,z,z')\in [0,T]\times\mathbb{R}^n\times\mathbb{R}^{n\times d}\times\mathbb{R}^{n\times d}$, there exists a constant $L>0$, such that 
    \begin{align*}
        f_j(t,y,z,y',z')-f_j(t,y,z,y'',z')\leq L(y'-y'')^+,
    \end{align*}
    where $(y'-y'')^+=(\sum_{j=1}^n|(y'_j-y''_j)^+|^2)^{1/2}$.
\end{itemize}

\begin{theorem}\label{comparison}
    Let $f^i$, $i=1,2$, be two drivers satisfying Assumption \ref{assf} and (H1). Moreover, we assume that 
    \begin{itemize}
        \item[(i)] One of the drivers is independent of $z'$;
        \item[(ii)] One of the drivers satisfies (H2).
    \end{itemize}
    Given $\xi^i\in L^2(\mathcal{F}_T;\mathbb{R}^n)$, $i=1,2$, let $(Y^i,Z^i)$ be the solution to the following conditional expectation BSDE 
    \begin{equation}\begin{split}\label{CEBSDEK}
        Y^i(t)=&\xi^i+\int_t^T f^i(s,Y^i(s),Z^i(s),\E[Y^i(s)|\mathcal{G}_s],\E[Z^i(s)|\mathcal{G}_s])ds\\
        &-\int_t^T Z^i(s) dB_s+K^i(T)-K^i(t),
    \end{split}\end{equation}
    where $K^i\in \mathcal{S}^2_n$, $i=1,2$. For any $j=1,2,\cdots,n$, suppose that $\xi^1_j\leq \xi^2_j$, $\{K^2_j(t)-K^1_j(t)\}_{t\in[0,T]}$ is nondecreasing. Then, for any $j=1,2,\cdots,n$, we have $Y^1_j(t)\leq Y^2_j(t)$, for all $t\in[0,T]$, $\P$-a.s.
\end{theorem}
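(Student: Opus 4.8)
The plan is to adapt the classical It\^{o}-based comparison argument to this multidimensional, partially-informed setting, working component by component with the positive part of $\hat Y_j:=Y^1_j-Y^2_j$. Set $\hat Z=Z^1-Z^2$ and $\hat K=K^1-K^2$; by hypothesis $(\hat\xi_j)^+=(\xi^1_j-\xi^2_j)^+=0$ and $\d\hat K_j\le 0$ for every $j$. Since $x\mapsto (x^+)^2$ is $C^1$ with a.e. second derivative $2\mathbf 1_{\{x>0\}}$ and no atom at the origin, the Meyer--It\^{o} formula applied to $\sum_j e^{\beta t}((\hat Y_j)^+)^2$ carries no local-time term and contributes $\int_t^T e^{\beta s}\mathbf 1_{\{\hat Y_j>0\}}|\hat Z_j|^2\,\d s$ from the quadratic variation. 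The terminal term vanishes, the $\d\hat K_j$-term is $\le 0$ (as $(\hat Y_j)^+\ge 0$ and $\d\hat K_j\le 0$), and the stochastic integral has zero expectation. Everything therefore reduces to controlling $\sum_j (\hat Y_j)^+\hat f_j$, where $\hat f_j$ is the $j$-th component of the difference of the two drivers evaluated along the two solutions.

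The heart of the argument is a telescoping decomposition of $\hat f_j$ on $\{\hat Y_j>0\}$. I would introduce the comparison vectors $\hat p:=Y^1\wedge Y^2$ (componentwise minimum) and $\hat q$, equal to $Z^1$ except that its $j$-th row is replaced by $Z^2_j$; then $\hat p_j=Y^2_j$, $\hat p_l\le Y^2_l$ for $l\ne j$, $\hat q_j=Z^2_j$, while $Y^1-\hat p=\hat Y^+$ and $Z^1-\hat q$ has only its $j$-th row nonzero, equal to $\hat Z_j$. I would first invoke hypothesis (i): because one driver is independent of $z'$, its $\E[Z|\mathcal G_s]$-argument may be replaced by the other's at no cost, so that after this alignment the mean-field discrepancy survives only in the $\E[Y|\mathcal G_s]$-argument. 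With all three pieces then sharing the (aligned) mean-field arguments, I write
$$\hat f_j = \big[f^1_j(Y^1,Z^1,\cdots)-f^1_j(\hat p,\hat q,\cdots)\big] + \big[f^1_j(\hat p,\hat q,\cdots)-f^2_j(Y^2,Z^2,\cdots)\big] + R_j,$$
where $R_j$ collects the remaining $\E[Y|\mathcal G_s]$-difference, placed on whichever driver satisfies (H2). The middle bracket is $\le 0$ by (H1) (the $j$-th $y,z$-components agree and the off-diagonal $y$-components are ordered), the first bracket is $\le\lambda(|\hat Y^+|+|\hat Z_j|)$ by the Lipschitz assumption, and $R_j\le L(\E[Y^1_s|\mathcal G_s]-\E[Y^2_s|\mathcal G_s])^+$ by (H2). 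The crucial feature is that the Lipschitz remainder involves \emph{only} $\hat Z_j$, precisely the component supplied by the It\^{o} formula for absorption.

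It then remains to run the estimate. Multiplying by $(\hat Y_j)^+$, summing over $j$ and using Young's inequality, the $\lambda|\hat Z_j|$-term is absorbed into the $\mathbf 1_{\{\hat Y_j>0\}}|\hat Z_j|^2$-term on the left; the (H2)-term is handled by conditional Jensen, $(\E[\hat Y_l|\mathcal G_s])^+\le\E[(\hat Y_l)^+|\mathcal G_s]$, so that after taking expectations $\E[R_j^2]\le L^2\E[|\hat Y^+_s|^2]$, exactly as in the proof of Proposition \ref{aprioriestimate}. Choosing $\beta$ large and taking expectations, all remaining terms are dominated by $C\int_t^T\E[|\hat Y^+_s|^2]\,\d s$, and a backward Gronwall inequality forces $\E[|\hat Y^+_t|^2]=0$ for every $t$. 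Hence $Y^1_j(t)\le Y^2_j(t)$ for all $j$ and all $t$, $\P$-a.s., the exceptional null set being removed simultaneously for all $t$ by path continuity.

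The main obstacle is the second paragraph: arranging the telescoping so that it respects all three constraints at once --- that (H1) is applicable (dictating the construction of $\hat p,\hat q$ with matching $j$-th components), that the Lipschitz remainder sees only the single component $\hat Z_j$, and that the mean-field remainder is simultaneously free of the uncontrollable $\E[Z|\mathcal G_s]$-term (via (i)) and one-signed in the $\E[Y|\mathcal G_s]$-term (via (ii)). The counterexamples of \cite{BLP} show that dropping either (i) or (ii) destroys the argument, so both hypotheses must enter essentially; the delicate bookkeeping is to check that, when (i) and (ii) hold for possibly different drivers, the alignment step and the placement of $R_j$ remain mutually compatible in each case.
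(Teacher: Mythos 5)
Your proposal is correct and follows essentially the same route as the paper: Tanaka/Meyer--It\^{o} applied to $((\widehat{Y}_j)^+)^2$, the sign of $\d\widehat{K}_j$, and the same three-term telescoping of $\widehat{f}_j$ --- your $\hat p=Y^1\wedge Y^2$ and $\hat q$ coincide on $\{\widehat{Y}_j>0\}$ with the paper's intermediate point $(Y^{1,2}_j(s)-\widehat{Y}^+_j(s),\,Z^{1,2}_j(s))$ --- with (H1) giving the one-signed bracket, the Lipschitz remainder seeing only $\widehat{Z}_j$, (H2) plus conditional Jensen handling the mean-field term, and Gr\"onwall after summing over $j$. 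The only deviations are cosmetic (direct Meyer--It\^{o} instead of Tanaka followed by It\^{o}, an exponential weight the paper omits here, and your explicit case bookkeeping where the paper says ``without loss of generality'').
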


\begin{proof}
    Without loss of generality, we assume that $f^1$ is independent of $z'$ and satisfies (H2). Set 
    $$\widehat{Y}(t)=Y^1(t)-Y^2(t),\quad\quad\widehat{Z}_t=Z^1(t)-Z^2(t),\quad\quad\widehat{K}(t)=K^1(t)-K^2(t)$$
    and $$\widehat{f}(t)=f^1(t,Y^1(t),Z^1(t),\E[Y^1(t)|\mathcal{G}_t])-f^2(t,Y^2(t),Z^2(t),\E[Y^2(t)|\mathcal{G}_t],\E[Z^2(t)|\mathcal{G}_t]).$$ 
    By the Tanaka-Meyer formula, for any $j=1,2,\cdots,n$, we have
    \begin{align*}
        d\widehat{Y}^+_j(t)=I_{\{\widehat{Y}_j(t)>0\}}\widehat{Z}_j(t)dB_t-I_{\{\widehat{Y}_j(t)>0\}}\widehat{f}_j(t)dt-I_{\{\widehat{Y}_j(t)>0\}}d\widehat{K}_j(t)+dC_j(t),
    \end{align*}
    where $\{C_j(t)\}_{t\in[0,T]}$ is an adpated, continuous, and nondecreasing process related with the local time for $\widehat{Y}_j$.  Applying It\^{o}'s formula to $(\widehat{Y}_j(t)^+)^2$ and taking expectations on both sides,  for any $t\in[0,T]$, we have 
    \begin{align*}
        &\E[(\widehat{Y}^+_j(t))^2]+\E\left[\int_t^T |\widehat{Z}_j(s)|^2 I_{\{\widehat{Y}_j(s)>0\}}ds\right]+2\E\left[\int_t^T\widehat{Y}^+_j(s)dC_j(s)\right]\\
        =&2\E\left[\int_t^T\widehat{Y}^+_j(s)\widehat{f}_j(s) ds\right]+2\E\left[\int_t^T\widehat{Y}^+_j(s) d\widehat{K}_j(s)\right]\leq 2\E\left[\int_t^T\widehat{Y}^+_j(s)\widehat{f}_j(s) ds\right].
    \end{align*}
    Set 
    \begin{align*}
        Y^{1,2}_j(s)&:=(Y^1_1(s),\cdots,Y^1_{j-1}(s), Y^2_j(s),Y^1_{j+1}(s),\cdots,Y^1_n(s)),\\
        \widehat{Y}^+_j(s)&:=(\widehat{Y}^+_1(s),\cdots,\widehat{Y}^+_{j-1}(s),0,\widehat{Y}^+_{j+1}(s),\cdots,\widehat{Y}^+_n(s)),\\
        Z^{1,2}_j(s)&:=(Z^1_1(s),\cdots,Z^1_{j-1}(s), Z^2_j(s),Z^1_{j+1}(s),\cdots,Z^1_n(s)).
    \end{align*}
    Since $f^1$ is independent of $z'$ and satisfies (H2), it is easy to check that
    \begin{align*}
        \widehat{f}_j(s) =&f^1_j(s,Y^1(s),Z^1(s),\E[Y^1(s)|\mathcal{G}_s])-f^1_j(s,Y^1(s),Z^1(s),\E[Y^2(s)|\mathcal{G}_s])\\
        &+f^1_j(s,Y^1(s),Z^1(s),\E[Y^2(s)|\mathcal{G}_s])-f^1_j(s,Y^{1,2}_j(s)-\widehat{Y}^+_j(s),Z^{1,2}(s),\E[Y^2(s)|\mathcal{G}_s])\\
        &+f^1_j(s,Y^{1,2}_j(s)-\widehat{Y}^+_j(s),Z^{1,2}(s),\E[Y^2(s)|\mathcal{G}_s])-f^2_j(s,Y^2(s),Z^2(s),\E[Y^2(s)|\mathcal{G}_s],\E[Z^2(s)|\mathcal{G}_s])\\
        \leq & L(\E[Y^1(s)|\mathcal{G}_s]-\E[Y^2(s)|\mathcal{G}_s])^++\lambda|Y^1(s)-(Y^{1,2}_j(s)-\widehat{Y}^+_j(s))|+\lambda|Z^1(s)-Z^{1,2}(s)|\\
        =&L\left(\sum_{i=1}^n \left|(\E[\widehat{Y}_i(s)|\mathcal{G}_s])^+\right|^2\right)^{1/2}+\lambda\left(\sum_{i\neq j}|\widehat{Y}^+_i(s)|^2+|\widehat{Y}_j(s)|^2\right)^{1/2}+\lambda|\widehat{Z}_j(s)|.
    \end{align*}
    Simple calculation yields that 
\begin{align*}
    2\int_t^T\widehat{Y}^+_j(s)\widehat{f}_j(s) ds
    \leq &L\int_t^T\left(\widehat{Y}^+_j(s)\right)^2ds+L\int_t^T\sum_{i=1}^n \E\left[\left(\widehat{Y}^+_i(s)\right)^2\Big|\mathcal{G}_s\right]ds\\
    &+2\lambda\int_t^T \widehat{Y}^+_j(s)\left(\sum_{i\neq j}|\widehat{Y}^+_i(s)|+|\widehat{Y}_j(s)|\right)ds\\
    &+2\lambda^2\int_t^T\left(\widehat{Y}^+_j(s)\right)^2ds+\frac{1}{2}\int_t^T |\widehat{Z}_j(s)|^2 I_{\{\widehat{Y}_j(s)>0\}}ds\\
    \leq &(L+2\lambda^2)\int_t^T\left(\widehat{Y}^+_j(s)\right)^2ds+L\int_t^T\sum_{i=1}^n \E\left[\left(\widehat{Y}^+_i(s)\right)^2\Big|\mathcal{G}_s\right]ds\\
    &+2\lambda\int_t^T\left(\widehat{Y}^+_j(s)\right)^2ds+\lambda(n-1)\int_t^T\left(\widehat{Y}^+_j(s)\right)^2ds\\
    &+\lambda\sum_{i\neq j}\int_t^T\left(\widehat{Y}^+_i(s)\right)^2ds+\frac{1}{2}\int_t^T |\widehat{Z}_j(s)|^2 I_{\{\widehat{Y}_j(s)>0\}}ds.
\end{align*}
      All the above analysis indicates that 
    \begin{align*}
        \E\left[\left(\widehat{Y}^+_j(t)\right)^2\right]
        \leq& (L+2\lambda^2+\lambda n+\lambda)\int_t^T\E\left[\left(\widehat{Y}^+_j(s)\right)^2\right]ds\\
        &+L\int_t^T\sum_{i=1}^n \E\left[\left(\widehat{Y}^+_i(s)\right)^2\right]ds+\lambda\int_t^T\sum_{i\neq j} \E\left[\left(\widehat{Y}^+_i(s)\right)^2\right]ds.
    \end{align*}
    Taking the sum from $1$ to $n$ yields that, there exists a constant $M$ depending on $\lambda,n,L$, such that 
    \begin{align*}
        \E\left[\sum_{j=1}^n\left(\widehat{Y}^+_j(t)\right)^2\right]\leq M\int_t^T\E\left[\sum_{j=1}^n\left(\widehat{Y}^+_j(s)\right)^2\right]ds.
    \end{align*}
    Applying Gr\"onwall's inequality, we obtain that  $\E[\sum_{j=1}^n(\widehat{Y}^+_j(t))^2]=0$ for any $t\in[0,T]$. The proof is complete.
\end{proof}

\begin{remark}
   (i) Suppose that $n=1$. (H1) is the common assumption made for the comparison theorem, i.e., for any $(\omega,t,y,z,y',z')\in\Omega\times[0,T]\times\mathbb{R}\times\mathbb{R}^d\times\mathbb{R}\times\mathbb{R}^d$, we have
   \begin{align*}
       f^1(\omega,t,y,z,y',z')\leq f^2(\omega,t,y,z,y',z').
   \end{align*}
   (H2) clearly holds when $f$ is nondecreasing in $y'$. 
    
    \noindent (ii) Suppose $f$ satisfies Assumption \ref{assf} and is increasing in component $y'$ in the following sense:
    \begin{itemize}
        \item For each fixed $j=1,2,\cdots,n$, for all $(t,y,z,z')\in [0,T]\times\mathbb{R}^n\times\mathbb{R}^{n\times d}\times\mathbb{R}^{n\times d}$, for all $i=1,2,\cdots,n$ and $y',y''\in\mathbb{R}^n$ with $y'_l=y''_l$, $l\neq i$ and $y'_i\leq y''_i$, we have
        \begin{align*}
        f_j(t,y,z,y',z')-f_j(t,y,z,y'',z')\leq 0.
    \end{align*}
    \end{itemize}
    Then, $f$ satisfies (H2). We only show the case that $n=2$. In fact, simple calculation implies that 
    \begin{align*}
        &f_j(t,y,z,y',z')-f_j(t,y,z,y'',z')\\
        =&f_j(t,y,z,y'_1,y'_2,z')-f_j(t,y,z,y''_1,y'_2,z')\\
        &+f_j(t,y,z,y''_1,y'_2,z')-f_j(t,y,z,y''_1,y''_2,z')\\
        \leq &\lambda(y'_1-y''_1)^++\lambda(y'_2-y''_2)^+\leq 2^{1/2}\lambda(y'-y'')^+.
    \end{align*}
\end{remark}

We also have a converse comparison theorem.

\begin{theorem}\label{conversecomparison}
  Under the assumptions of Theorem \ref{comparison}, suppose that for some $t\in[0,T]$, $Y^1_j(t)=Y^2_j(t)$, for all $j=1,2,\cdots,n$. Then, for any $s\in[t,T]$ and $j=1,2,\cdots,n$, we have $Y^1_j(s)=Y^2_j(s)$.  
\end{theorem}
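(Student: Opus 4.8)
The plan is to reduce the converse statement to the already-proven (forward) comparison $Y^1_j\le Y^2_j$ by linearizing the difference of the drivers and then invoking the explicit representation of a linear BSDE. I keep the notation $\widehat{Y}(t)=Y^1(t)-Y^2(t)$, $\widehat{Z}_t=Z^1(t)-Z^2(t)$, $\widehat{K}(t)=K^1(t)-K^2(t)$ of the proof of Theorem \ref{comparison}, and recall that Theorem \ref{comparison} already gives $\widehat{Y}_j(s)\le0$ for every $s$ and $j$, while $\widehat{K}_j=-(K^2_j-K^1_j)$ is nonincreasing. Without loss of generality $f^1$ is independent of $z'$ and satisfies (H2).

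First I would linearize $\widehat{f}_j$ for a fixed $j$, using the same insertion of the mixed arguments $Y^{1,2}_j,Z^{1,2}$ as in the proof of Theorem \ref{comparison} (now with $\widehat{Y}^+_j\equiv0$ since $\widehat{Y}_j\le0$). This splits $\widehat{f}_j$ into: (a) the change of the $y'$-slot from $\E[Y^1|\cG]$ to $\E[Y^2|\cG]$; (b) the change of the $j$-th coordinate of $(y,z)$ from $(Y^1_j,Z^1_j)$ to $(Y^2_j,Z^2_j)$; and (c) the comparison of $f^1_j$ and $f^2_j$ at the mixed argument. Term (b) is purely Lipschitz in the single pair $(\widehat{Y}_j,\widehat{Z}_j)$, so it can be written as $a_j(s)\widehat{Y}_j(s)+b_j(s)\cdot\widehat{Z}_j(s)$ with $a_j,b_j$ progressively measurable and bounded by $\lambda$. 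The crucial point is that (a) and (c) are nonpositive: (a) is $\le L(\E[\widehat{Y}|\cG])^+=0$ by (H2), because $\widehat{Y}\le0$ forces $(\E[\widehat{Y}_i|\cG])^+=0$ componentwise; and (c) is $\le0$ by (H1), whose hypotheses are met precisely thanks to the forward comparison, since $Y^{1,2}_j$ agrees with $Y^2$ in coordinate $j$ and satisfies $Y^1_l\le Y^2_l$ in the others, while $Z^{1,2}$ agrees with $Z^2$ in row $j$. Hence $\widehat{f}_j(s)=a_j(s)\widehat{Y}_j(s)+b_j(s)\cdot\widehat{Z}_j(s)+r_j(s)$ with $r_j\le0$.

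Then $(\widehat{Y}_j,\widehat{Z}_j)$ solves, on $[t,u]$ for any $u\in[t,T]$, a linear BSDE with bounded coefficients, nonpositive source $r_j$, and the nonincreasing finite-variation term $\widehat{K}_j$ (note that although $K^i_j$ need not individually have finite variation, the difference $A_j:=K^2_j-K^1_j$ is nondecreasing, so $\widehat{Y}_j$ is a genuine continuous semimartingale, exactly as needed for the Tanaka--Meyer step of Theorem \ref{comparison}). Introducing the adjoint process $\Gamma^j_s=\exp(\int_t^s a_j\,dr+\int_t^s b_j\,dB_r-\tfrac12\int_t^s|b_j|^2dr)$ and applying It\^o to $\Gamma^j_s\widehat{Y}_j(s)$, the drift collapses to $-\Gamma^j_s r_j(s)\,ds$ plus the $A_j$-contribution, and taking $\E[\cdot|\mathcal{F}_t]$ (the stochastic integral being a true martingale by the boundedness of $a_j,b_j$ and the integrability of $(\widehat{Y},\widehat{Z})$) yields
$$\widehat{Y}_j(t)=\E\Big[\Gamma^j_u\widehat{Y}_j(u)+\int_t^u\Gamma^j_s r_j(s)\,ds-\int_t^u\Gamma^j_s\,dA_j(s)\,\Big|\,\mathcal{F}_t\Big].$$
Every term inside is $\le0$, since $\Gamma^j>0$, $\widehat{Y}_j(u)\le0$, $r_j\le0$ and $dA_j\ge0$; as $\widehat{Y}_j(t)=0$ by hypothesis, this sum of nonpositive quantities has zero conditional expectation, hence vanishes a.s. In particular $\Gamma^j_u\widehat{Y}_j(u)=0$, so $\widehat{Y}_j(u)=0$. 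Letting $u$ range over $[t,T]$ and using continuity gives $Y^1_j(s)=Y^2_j(s)$ on $[t,T]$ for every $j$.

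I expect the main obstacle to be the bookkeeping that makes the residual $r_j$ genuinely nonpositive: the intermediate arguments must be chosen so that (H1) applies verbatim while the forward inequality $Y^1\le Y^2$ is used to kill the conditional-expectation term through (H2). A secondary technical point is that a naive forward Gr\"onwall fails, since integrating $|\widehat{Y}_j|^2$ from $t$ produces a $+|\widehat{Z}_j|^2$ term of the wrong sign; this is exactly why the change-of-measure/exponential-weight representation, which absorbs $\widehat{Z}_j$ into $\Gamma^j$, is the right device.
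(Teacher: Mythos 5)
Your proposal is correct and follows essentially the same route as the paper's proof: the same insertion of the mixed arguments $Y^{1,2}_j$, $Z^{1,2}_j$, the same use of (H2) together with the forward comparison $\widehat{Y}\le 0$ to kill the $y'$-term and of (H1) to make the $f^1$-versus-$f^2$ residual nonpositive, and the same linear-BSDE representation forcing $\widehat{Y}_j(u)=0$ from $\widehat{Y}_j(t)=0$. The only (immaterial) difference is that you absorb the Girsanov change of measure into a single adjoint process $\Gamma^j$ under $\P$, whereas the paper keeps the exponential weight $\exp(\int\alpha_j)$ and the measure change $\P^j$ separate; your explicit verification that the sum of nonpositive terms with zero conditional expectation must vanish, and your remark on the semimartingale property of $\widehat{Y}_j$ via $A_j=K^2_j-K^1_j$, are accurate renditions of what the paper leaves implicit.
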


\begin{proof}
    We use the same notation introduced in the proof of Theorem \ref{comparison}. Suppose that $f^1$ is independent of $z'$ and satisfies (H2). For any $j=1,2,\cdots,n$, there exist two bounded, measurable processes $\alpha_j,\beta_j$, such that 
    \begin{align*}
        f^1_j(s,Y^1(s),Z^1(s),\E[Y^1(s)|\mathcal{G}_s])-f^1_j(s,Y^{1,2}_j(s),Z^{1,2}_j(s),\E[Y^1(s)|\mathcal{G}_s])=a_j(s)\widehat{Y}_j(s)+\beta_j(s)\widehat{Z}_j(s).
    \end{align*}
   Applying It\^{o}'s formula to $\exp(\int_0^t \alpha_j(r)dr)\widehat{Y}_j(t)$, we obtain that for any $u\in[t,T]$,
\begin{equation}\begin{split}\label{widehatyj}
    \widehat{Y}_j(t)=&\exp\left(\int_t^u \alpha_j(r)dr\right)\widehat{Y}_j(u)+\int_t^u \exp\left(\int_t^s \alpha_j(r)dr\right)\widetilde{f}_j(s) ds\\
    &-\int_t^u \exp\left(\int_t^s \alpha_j(r)dr\right)\widehat{Z}_j(s)d\widetilde{B}^j_s-\int_t^u \exp\left(\int_t^s \alpha_j(r)dr\right)d\widehat{K}_j(s),
\end{split}\end{equation}
 where $\widetilde{B}^j_s=B_s-\int_0^s \beta_j(r)dr$ and 
 \begin{align*}
     \widetilde{f}_j(s)=f^1_j(s,Y^{1,2}_j(s),Z^{1,2}_j(s),\E[Y^1(s)|\mathcal{G}_s])-f^2_j(s,Y^2(s),Z^2(s),\E[Y^2(s)|\mathcal{G}_s],\E[Z^2(s)|\mathcal{G}_s]).
 \end{align*}
 Let $\P^j$ be the probability measure such that 
 \begin{align*}
     \frac{d\P^j}{d\P}=\exp\left(\int_0^T \beta_j(s)dB_s-\frac{1}{2}\int_0^T|\beta_j(s)|^2ds\right).
 \end{align*}
 Noting that $\widehat{K}_j$ is nondecreasing and taking conditional expectations on both sides of \eqref{widehatyj} under $\P^j$ with respect to $\mathcal{F}_t$ imply that 
 \begin{align*}
     0=\widehat{Y}_j(t)\leq\E^{\P^j}\left[\exp\left(\int_t^u \alpha_j(r)dr\right)\widehat{Y}_j(u)+\int_t^u \exp\left(\int_t^s \alpha_j(r)dr\right)\widetilde{f}_j(s) ds\Big|\mathcal{F}_t\right].
 \end{align*}
 Recalling Theorem \ref{comparison} and assumptions (H1), (H2), for any $s\in[0,T]$, we have $\widehat{Y}_j(s)\leq 0$ and 
 \begin{align*}
     \widetilde{f}_j(s)=&f^1_j(s,Y^{1,2}_j(s),Z^{1,2}_j(s),\E[Y^1(s)|\mathcal{G}_s])-f^1_j(s,Y^{1,2}_j(s),Z^{1,2}_j(s),\E[Y^2(s)|\mathcal{G}_s])\\
     &+f^1_j(s,Y^{1,2}_j(s),Z^{1,2}_j(s),\E[Y^2(s)|\mathcal{G}_s])-f^2_j(s,Y^2(s),Z^2(s),\E[Y^2(s)|\mathcal{G}_s],\E[Z^2(s)|\mathcal{G}_s])\leq 0.
 \end{align*}
 We finally obtain the desired result.
\end{proof}

Motivated by the comparison theorem for conditional reflected BSDEs (see Corollary 3.3 in \cite{HHL}), in the following, we compare the conditional expectation of the solutions to conditional expectation BSDEs when $n=1$.
\begin{proposition}\label{comparison'}
 Let $f_i$, $i=1,2$, be two drivers satisfying Assumption \ref{assf}. Let $f_2$ be independent of $(z,z')$. 
   Given $\xi^i\in L^2(\mathcal{F}_T)$ and $K^i\in \mathcal{S}^2$, $i=1,2$, let $(Y^i,Z^i)$ be the solution to the conditional expectation BSDE \eqref{CEBSDEK}. Suppose that
    \begin{itemize}
        \item[(1)] $\E[\xi^1|\mathcal{G}_T]\leq \E[\xi^2|\mathcal{G}_T]$,
        \item[(2)] $\{\E[K^2_t-K^1_t|\mathcal{G}_t]\}_{t\in[0,T]}$ (or $\{K^2_t-K^1_t\}_{t\in[0,T]}$) is a nondecreasing process,
        \item[(3)] for any $t\in[0,T]$,
    \begin{align}\label{eqf1f2}
        \E\left[f_1\left(t,Y^1_t,Z^1_t,\E\left[Y^1_t\big|\mathcal{G}_t\right],\E\left[Z^1_t\big|\mathcal{G}_t\right]\right)|\mathcal{G}_t\right]\leq \E\left[f_2\left(t,Y^1_t,\E\left[Y^1_t\big|\mathcal{G}_t\right]\right)|\mathcal{G}_t\right],
    \end{align}
    \item[(4)] the process $\{a_t\}_{t\in[0,T]}$ is $\mathbb{G}$-adapted and for any $t\in[0,T]$, we have $a_t+b_t\geq 0$, where 
    \begin{align*}
    a_t&=\frac{f_2\left(t,Y^1_t,\E[Y^1_t|\mathcal{G}_t]\right)-f_2\left(t,Y^2_t,\E[Y^1_t|\mathcal{G}_t]\right)}{Y^1_t-Y^2_t}I_{\{Y^1_t\neq Y^2_t\}},\\
        b_t&=\frac{f_2\left(t,Y^2_t,\E[Y^1_t|\mathcal{G}_t]\right)-f_2\left(t,Y^2_t,\E[Y^2_t|\mathcal{G}_t]\right)}{\E[Y^1_t|\mathcal{G}_t]-\E[Y^2_t|\mathcal{G}_t]}I_{\{\E[Y^1_t|\mathcal{G}_t]\neq\E[Y^2_t|\mathcal{G}_t]\}}.
    \end{align*}
    \end{itemize}
    Then, we have $\E[Y^1_t|\mathcal{G}_t]\leq \E[Y^2_t|\mathcal{G}_t]$, for all $t\in[0,T]$, $\P$-a.s.
\end{proposition}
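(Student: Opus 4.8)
The plan is to reduce everything to a backward integral inequality for $\bar Y_t:=\E[Y^1_t|\mathcal{G}_t]-\E[Y^2_t|\mathcal{G}_t]=\E[\hat Y_t|\mathcal{G}_t]$, where $\hat Y=Y^1-Y^2$, and then close it by a conditional Gr\"onwall argument. First I would subtract the two equations \eqref{CEBSDEK} and linearize the driver $f_2$ (which is independent of $(z,z')$) along the two intermediate points appearing in the definitions of $a_t$ and $b_t$: writing $\hat Z=Z^1-Z^2$, $\hat K=K^1-K^2$, $\hat\xi=\xi^1-\xi^2$, this yields
\[
f_1(\cdots)-f_2(t,Y^2_t,\E[Y^1_t|\mathcal{G}_t])=\Delta_t+a_t\hat Y_t+b_t\bar Y_t,
\]
where $\Delta_t:=f_1(t,Y^1_t,Z^1_t,\E[Y^1_t|\mathcal{G}_t],\E[Z^1_t|\mathcal{G}_t])-f_2(t,Y^1_t,\E[Y^1_t|\mathcal{G}_t])$ carries all the $(z,z')$ dependence and is exactly the quantity whose $\mathcal{G}_t$-conditional expectation is controlled by hypothesis (3). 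Since $f_2$ is $\lambda$-Lipschitz, $|a_t|,|b_t|\le\lambda$; $a_t$ is $\mathbb{G}$-adapted by hypothesis (4), while $b_t$ is merely progressively measurable.

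Next I would take $\E[\,\cdot\,|\mathcal{G}_t]$ in the difference equation
\[
\hat Y_t=\hat\xi+\int_t^T(\Delta_s+a_s\hat Y_s+b_s\bar Y_s)\,ds-\int_t^T\hat Z_s\,dB_s+\hat K_T-\hat K_t .
\]
The stochastic integral disappears: $N_\cdot:=\int_0^\cdot\hat Z_s\,dB_s$ is an $\mathbb{F}$-martingale, so by the tower property and $\mathcal{G}_t\subset\mathcal{F}_t$ we get $\E[N_T-N_t|\mathcal{G}_t]=\E[\E[N_T-N_t|\mathcal{F}_t]|\mathcal{G}_t]=0$, and this is precisely the step that needs no regularity of $\mathbb{G}$. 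Using that $a_s$ is $\mathcal{G}_s$-measurable gives $\E[a_s\hat Y_s|\mathcal{G}_s]=a_s\bar Y_s$, while pulling the $\mathcal{G}_s$-measurable factor $\bar Y_s$ out gives $\E[b_s\bar Y_s|\mathcal{G}_s]=c_s\bar Y_s$ with $c_s:=\E[b_s|\mathcal{G}_s]$. Set $\gamma_s:=a_s+c_s=\E[a_s+b_s|\mathcal{G}_s]\ge0$ by (4). Applying the tower property repeatedly over $\mathcal{G}_t\subset\mathcal{G}_s\subset\mathcal{G}_T$, I would show the three remaining terms are nonpositive: $\E[\hat\xi|\mathcal{G}_t]=\E[\E[\hat\xi|\mathcal{G}_T]|\mathcal{G}_t]\le0$ by (1); $\E[\int_t^T\Delta_s\,ds|\mathcal{G}_t]=\int_t^T\E[\E[\Delta_s|\mathcal{G}_s]|\mathcal{G}_t]\,ds\le0$ by (3) and Fubini; and $\E[\hat K_T-\hat K_t|\mathcal{G}_t]\le0$ by (2). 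For this last term the pathwise case $\{K^2-K^1\}$ nondecreasing is immediate, while for $\{\E[K^2_t-K^1_t|\mathcal{G}_t]\}$ nondecreasing I would write, with $\tilde A_t:=\E[K^2_t-K^1_t|\mathcal{G}_t]$, the identity $\E[K^2_T-K^1_T|\mathcal{G}_t]-\tilde A_t=\E[\tilde A_T-\tilde A_t|\mathcal{G}_t]\ge0$. This leaves the inequality
\[
\bar Y_t\le\E\Big[\int_t^T\gamma_s\bar Y_s\,ds\,\Big|\mathcal{G}_t\Big],\qquad 0\le\gamma_s\le 2\lambda .
\]

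Finally I would close the estimate. Because $\gamma_s\ge0$, the positive-part trick works: from $x\le X\Rightarrow x^+\le X^+$, conditional Jensen, $(\int g)^+\le\int g^+$, and $(\gamma_s\bar Y_s)^+=\gamma_s\bar Y^+_s$, I get $\bar Y^+_t\le\E[\int_t^T\gamma_s\bar Y^+_s\,ds|\mathcal{G}_t]$. Taking unconditional expectations and using $\gamma_s\le2\lambda$ yields $\E[\bar Y^+_t]\le2\lambda\int_t^T\E[\bar Y^+_s]\,ds$, so backward Gr\"onwall forces $\E[\bar Y^+_t]=0$ for every $t$, that is $\E[Y^1_t|\mathcal{G}_t]\le\E[Y^2_t|\mathcal{G}_t]$, $\P$-a.s. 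The main obstacle is conceptual rather than computational: one must resist absorbing $a_s$ into an exponential integrating factor (which would leave the residual coefficient $c_s$ possibly negative and break the positive-part step) and instead keep the combination $a_s+c_s$ intact, so that hypothesis (4) delivers a genuinely nonnegative Gr\"onwall coefficient. The $\mathbb{G}$-adaptedness of $a_s$ demanded in (4) is exactly what lets one pull $a_s$ through $\E[\,\cdot\,|\mathcal{G}_s]$ and form $\gamma_s=\E[a_s+b_s|\mathcal{G}_s]$.
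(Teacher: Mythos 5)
Your proposal is correct and follows essentially the same route as the paper's own proof: linearize $f_2$ via the coefficients $a_t,b_t$, condition on $\mathcal{G}_t$ so the stochastic integral vanishes and hypotheses (1)--(3) kill the $\hat\xi$, $\Delta$ and $\hat K$ terms, then use $a_t+b_t\ge 0$ together with the Lipschitz bound $|a_t|,|b_t|\le\lambda$ to pass to positive parts and close with Gr\"onwall. The only cosmetic difference is that you condition on $\mathcal{G}_s$ first to form $\gamma_s=a_s+\E[b_s|\mathcal{G}_s]$, whereas the paper keeps $(a_s+b_s)\E[\widehat{Y}_s|\mathcal{G}_s]$ inside $\E[\,\cdot\,|\mathcal{G}_t]$ and bounds it directly by $2\lambda\bigl(\E[\widehat{Y}_s|\mathcal{G}_s]\bigr)^+$ --- the two steps are equivalent.
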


\begin{proof}
     Using the same notations as in the proof of Theorem \ref{comparison} and letting $\widehat{\xi}=\xi^1-\xi^2$, it is easy to check that 
    \begin{align*}
        \widehat{Y}_t=\widehat{\xi}+\int_t^T \widehat{f}_sds-\int_t^T\widehat{Z}_sdB_s+\widehat{K}_T-\widehat{K}_t.
    \end{align*}
    Taking conditional expectations w.r.t. $\mathcal{G}_t$ on both sides yields that 
    \begin{align*}
        \E\left[\widehat{Y}_t\big|\mathcal{G}_t\right]=&\E\left[\widehat{\xi}\big|\mathcal{G}_t\right]+\int_t^T \E\left[\widehat{f}_s\big|\mathcal{G}_t\right]ds+\E\left[\widehat{K}_T-\widehat{K}_t\big|\mathcal{G}_t\right]\\
        =&\E\left[\E\left[\widehat{\xi}\big|\mathcal{G}_T\right]\big|\mathcal{G}_t\right]+\int_t^T \E\left[\widehat{f}_s\big|\mathcal{G}_t\right]ds+\E\left[\E\left[\widehat{K}_T\big|\mathcal{G}_T\right]-\E\left[\widehat{K}_t\big|\mathcal{G}_t\right]\big|\mathcal{G}_t\right]\\
        \leq &\int_t^T \E\left[\widehat{f}_s\big|\mathcal{G}_t\right]ds.
    \end{align*}
    It is easy to check that 
    \begin{align*}
        \E\left[\widehat{f}_s\big|\mathcal{G}_t\right]=&\E\left[\left(\widehat{f}\left(s,Y^1_s,Z^1_s,\E\left[Y^1_s\big|\mathcal{G}_s\right],\E\left[Z^1_s\big|\mathcal{G}_s\right]\right)+a_s\widehat{Y}_s+b_s\E\left[\widehat{Y}_s\big|\mathcal{G}_s\right]\right)\Big|\mathcal{G}_t\right]\\
        \leq &\E\left[(a_s+b_s)\E\left[\widehat{Y}_s\big|\mathcal{G}_s\right]\Big|\mathcal{G}_t\right]\leq 2\lambda\E\left[\left(\E\left[\widehat{Y}_s\big|\mathcal{G}_s\right]\right)^+\Big|\mathcal{G}_t\right],
    \end{align*}
    where
    \begin{align*}
        \widehat{f}\left(s,Y^1_s,Z^1_s,\E\left[Y^1_s\big|\mathcal{G}_s\right],\E\left[Z^1_s\big|\mathcal{G}_s\right]\right)=f_1\left(s,Y^1_s,Z^1_s,\E\left[Y^1_s\big|\mathcal{G}_s\right],\E\left[Z^1_s\big|\mathcal{G}_s\right]\right)-f_2\left(s,Y^1_s,\E\left[Y^1_s\big|\mathcal{G}_s\right]\right).
    \end{align*}
    All the above analysis indicates that 
    \begin{align*}
        \left(\E\left[\widehat{Y}_t\big|\mathcal{G}_t\right]\right)^+\leq 2\lambda\int_t^T\E\left[\left(\E\left[\widehat{Y}_s\big|\mathcal{G}_s\right]\right)^+\Big|\mathcal{G}_t\right]ds.
    \end{align*}
    Consequently, for any $t\in[0,T]$, we have 
    \begin{align*}
        \E\left[\left(\E\left[\widehat{Y}_t\big|\mathcal{G}_t\right]\right)^+\right]\leq 2\lambda\int_t^T\E\left[\left(\E\left[\widehat{Y}_s\big|\mathcal{G}_s\right]\right)^+\right]ds.
    \end{align*}
    It follows from the Gr\"onwall inequality that $\E[(\E[\widehat{Y}_t\big|\mathcal{G}_t])^+]=0$ for any $t\in[0,T]$, which implies the desired result. 
\end{proof}

\begin{remark}
(i)    Suppose that $f_1$ is independent of $(z,z')$ and satisfies condition (4) in Proposition \ref{comparison'}. 
The result in Proposition \ref{comparison'} still holds if \eqref{eqf1f2} is replaced by 
    \begin{align*}
        \E\left[f_2\left(t,Y^2_t,Z^2_t,\E\left[Y^2_t\big|\mathcal{G}_t\right],\E\left[Z^2_t\big|\mathcal{G}_t\right]\right)|\mathcal{G}_t\right]\geq \E\left[f_1\left(t,Y^2_t,\E\left[Y^2_t\big|\mathcal{G}_t\right]\right)|\mathcal{G}_t\right], \ t\in[0,T].
    \end{align*}

    \noindent (ii) To derive the comparison property for the conditional expectation of the solution, both drivers can be decreasing with respect to $y'$. However, if the drivers are decreasing in $y'$, the pointwise comparison property may not hold (see Example 3.2 in \cite{BLP}). 
\end{remark}
	
\section{Construction via penalization for conditional reflected BSDEs}


Throughout this section, we assume that the dimension for $Y$-term is $1$ and the sub-filtration $\mathbb{G}$ satisfies the usual conditions\footnote{The reason why we add the assumption that $\mathbb{G}$ satisfies the usual condition is that the proof of this section needs the Doob's maximal inequality for $\mathbb{G}$-martingales.}.  We provide an alternative approach to construct the solution to the conditional reflected BSDEs \eqref{nonlinearyz} by the penalization method using the conditional expectation BSDEs. More precisely, let us consider the following family of conditional expectation BSDEs
\begin{equation*}\label{panelization}
Y_t^n=\xi+\int_t^T f(s,Y_s^n,Z_s^n)ds+\int_t^T n(\E[Y_s^n-S_s|\mathcal{G}_s])^-ds-\int_t^T Z_s^n dB_s.
\end{equation*}
By Theorem \ref{wellposedness}, if $f$ satisfies the usual conditions (see Remark \ref{r1}), the above equation admits a unique pair of solution $(Y^n,Z^n)\in \mathcal{S}^2\times \mathcal{H}^2$. Set $K^{n}_t=\int_0^t n(\E[Y_s^n-S_s|\mathcal{G}_s])^- ds$.  We show that $(Y^n,Z^n,K^n)$ converges to $(Y,Z,K)$, which is the solution to the BSDE with conditional reflection. The proof will be divided into the following three lemmas. In the sequel, $C$ will always represent a constant independent of $n$, which may vary from line to line. 

\begin{lemma}\label{estimateYnZn}
Given $\xi\in L^2(\mathcal{F}_T)$ and $S\in\mathcal{S}^2$, suppose that $f$ satisfies the usual assumptions. There exists a constant $C$ independent of $n$, such that
\begin{align*}
&\E\left[\sup_{t\in[0,T]}|Y_t^n|^2\right]+\E\left[\int_0^T |Z_s^n|^2 ds\right]+\E\left[|K^n_T|^2\right]\\
\leq& C\left(\E[\xi^2]+\E\left[\int_0^T |f(t,0,0)|^2 dt\right]+\E\left[\sup_{t\in[0,T]}|S_t|^2\right]\right).
\end{align*}
\end{lemma}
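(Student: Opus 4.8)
The plan is to derive the estimate from It\^o's formula combined with a bootstrap between the bound for $(Y^n,Z^n)$ and the bound for $K^n_T$, exploiting the sign of the penalization term. First I would apply It\^o's formula to $|Y^n_t|^2$ and take expectations, obtaining
\[ \E[|Y^n_t|^2]+\E\Big[\int_t^T|Z^n_s|^2ds\Big]=\E[\xi^2]+2\E\Big[\int_t^T Y^n_s f(s,Y^n_s,Z^n_s)ds\Big]+2\E\Big[\int_t^T Y^n_s\,dK^n_s\Big]. \]
The driver term is handled as usual: by the Lipschitz property in Assumption \ref{assf} and Young's inequality, $2Y^n_s f(s,Y^n_s,Z^n_s)\le C|Y^n_s|^2+|f(s,0,0)|^2+\tfrac12|Z^n_s|^2$, and the $Z$-part is absorbed into the left-hand side.

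The key term is $\E[\int_t^T Y^n_s\,dK^n_s]$. Writing $\overline{Y}^n_s=\E[Y^n_s|\mathcal{G}_s]$ and $\overline{S}_s=\E[S_s|\mathcal{G}_s]$, the density $n(\overline{Y}^n_s-\overline{S}_s)^-$ of $dK^n_s$ is $\mathcal{G}_s$-measurable, so by the tower property $\E[\int_t^T Y^n_s dK^n_s]=\E[\int_t^T \overline{Y}^n_s dK^n_s]$. I would then use the elementary identity $\overline{Y}^n_s(\overline{Y}^n_s-\overline{S}_s)^-=\overline{S}_s(\overline{Y}^n_s-\overline{S}_s)^- -|(\overline{Y}^n_s-\overline{S}_s)^-|^2$ to obtain
\[ 2\E\Big[\int_t^T Y^n_s dK^n_s\Big]=2\E\Big[\int_t^T \overline{S}_s dK^n_s\Big]-2\E\Big[\int_t^T n|(\overline{Y}^n_s-\overline{S}_s)^-|^2ds\Big]\le 2\E\Big[\int_t^T \overline{S}_s dK^n_s\Big]. \]
Since $dK^n\ge0$, the surviving term is bounded by $2\E[\sup_s|\overline{S}_s|\,K^n_T]$, and by Young's inequality this is at most $\alpha^{-1}\E[\sup_s|\overline{S}_s|^2]+\alpha\E[|K^n_T|^2]$ for any $\alpha>0$; a maximal inequality applied to the $\mathbb{G}$-martingale $s\mapsto\E[\sup_r|S_r|\,|\,\mathcal{G}_s]$ gives $\E[\sup_s|\overline{S}_s|^2]\le C\E[\sup_s|S_s|^2]$.

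Combining these bounds and applying Gr\"onwall's inequality to $t\mapsto\E[|Y^n_t|^2]$ yields $a:=\sup_t\E[|Y^n_t|^2]+\E[\int_0^T|Z^n_s|^2ds]\le C(D+\alpha b)$, where $D$ denotes the data on the right-hand side of the claimed estimate and $b:=\E[|K^n_T|^2]$. To close the loop I would read $K^n_T=Y^n_0-\xi-\int_0^T f(s,Y^n_s,Z^n_s)ds+\int_0^T Z^n_s dB_s$ off the equation, which together with the Lipschitz bound on $f$ gives $b\le C(a+D)$. Substituting, choosing $\alpha$ small, and using Young once more decouples the two inequalities and produces $a+b\le CD$ with $C$ independent of $n$. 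Finally, writing $Y^n_t=\E[\xi+\int_t^T f ds+K^n_T-K^n_t\,|\,\mathcal{F}_t]$ and applying Doob's maximal inequality on $\mathbb{F}$ (which satisfies the usual conditions) upgrades the pointwise bound to $\E[\sup_t|Y^n_t|^2]\le CD$, completing the proof.

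The step I expect to be the main obstacle is the uniform-in-$n$ control of $K^n_T$: the penalization density $n(\cdot)^-$ is not bounded independently of $n$, so $\int \overline{S}\,dK^n$ cannot be estimated pointwise in $n$. This is precisely what the bootstrap resolves — the favorable negative quadratic term produced by the identity above, together with feeding the $K^n_T$-estimate back through the equation and absorbing it via Young's inequality, is what keeps all constants independent of $n$.
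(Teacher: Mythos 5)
Your proposal is correct and follows essentially the same route as the paper's proof: It\^o's formula, the tower-property decomposition of $\E[\int Y^n_s\,dK^n_s]$ that isolates the nonpositive term $-n\,|(\E[Y^n_s-S_s|\mathcal{G}_s])^-|^2$ and leaves the bound $\E[\int S_s\,dK^n_s]\leq \E[\sup_t|S_t|\,K^n_T]$, Young's inequality with a small parameter, the bootstrap via $K^n_T=Y^n_0-\xi-\int_0^T f(s,Y^n_s,Z^n_s)ds+\int_0^T Z^n_s\,dB_s$, and a maximal-type inequality to upgrade to $\E[\sup_t|Y^n_t|^2]$ (the paper uses B-D-G where you use Doob, and an $e^{\beta t}$ weight where you use Gr\"onwall). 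The one harmless deviation is your detour through $\sup_s \E[\sup_r|S_r|\,|\,\mathcal{G}_s]$, which is slightly delicate because $\mathbb{G}$ is not assumed right-continuous; it is avoidable by one more application of the tower property, turning $\E[\int \E[S_s|\mathcal{G}_s]\,dK^n_s]$ back into $\E[\int S_s\,dK^n_s]$ exactly as the paper does.
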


\begin{proof}
Applying It\^{o}'s formula to $e^{\beta t}|{Y}^n_t|^2$, where $\beta$ is a positive constant to be determined later, we have
\begin{equation}\label{eq1.61}\begin{split}
&e^{\beta t}|{Y}^n_t|^2+\int_t^T \beta e^{\beta s}|{Y}^n_s|^2 ds+\int_t^T e^{\beta s}|Z_s^n|^2 ds\\
=&e^{\beta T}|\xi|^2+\int_t^T 2e^{\beta s}{Y}^n_sf(s,Y_s^n,Z_s^n)ds-\int_t^T 2e^{\beta s}{Y}_s^n Z_s^ndB_s\\
&+\int_t^T 2ne^{\beta s}{Y}_s^n(\E[Y_s^n-S_s|\mathcal{G}_s])^-ds.
\end{split}\end{equation}
It is easy to check that 
\begin{equation}\label{eq1.62}\begin{split}
2{Y}^n_sf(s,Y_s^n,Z_s^n)
\leq |f(s,0,0)|^2+\frac{1}{2}|Z^n_s|^2+(1+2\lambda+2\lambda^2)|{Y}^n_s|^2
\end{split}\end{equation}
and 
\begin{equation}\label{eq1.62'}
    \begin{split}
        &n\E\left[\int_t^T e^{\beta s}{Y}_s^n(\E[Y_s^n-S_s|\mathcal{G}_s])^-ds\Big|\mathcal{G}_t\right]=n\int_t^T\E\left[e^{\beta s}{Y}_s^n(\E[Y_s^n-S_s|\mathcal{G}_s])^-|\mathcal{G}_t\right]ds\\
        =&n\int_t^T\E\left[e^{\beta s}({Y}_s^n-S_s)(\E[Y_s^n-S_s|\mathcal{G}_s])^-|\mathcal{G}_t\right]ds+n\int_t^T\E\left[e^{\beta s}{S}_s(\E[Y_s^n-S_s|\mathcal{G}_s])^-|\mathcal{G}_t\right]ds\\
        =&n\int_t^T\E\left[e^{\beta s}\E[{Y}_s^n-S_s|\mathcal{G}_s](\E[Y_s^n-S_s|\mathcal{G}_s])^-|\mathcal{G}_t\right]ds+n\E\left[\int_t^Te^{\beta s}{S}_s(\E[Y_s^n-S_s|\mathcal{G}_s])^-ds\Big|\mathcal{G}_t\right]\\
        \leq &\E\left[\int_t^Te^{\beta s}{S}_sdK^n_s\Big|\mathcal{G}_t\right].
    \end{split}
\end{equation}
Set $\beta=1+2\lambda+2\lambda^2$. Plugging Eq. \eqref{eq1.62} to Eq. \eqref{eq1.61}, taking conditional expectations w.r.t. $\mathcal{G}_t$ on both sides and using \eqref{eq1.62'}, we have
\begin{align*}
\E\left[|Y^n_t|^2+\int_t^T |Z_s^n|^2 ds\Big|\mathcal{G}_t\right]\leq C\E\left[|\xi|^2+\int_0^T |f(s,0,0)|^2ds+\sup_{t\in[0,T]}|S_t||K^n_T|\Big|\mathcal{G}_t\right].
\end{align*}
Taking expectations on both sides yields that 
\begin{equation}
\label{eq1.62''}
\begin{split}
&\sup_{t\in[0,T]}\E\left[|Y^n_t|^2\right]+\E\left[\int_0^T |Z_s^n|^2 ds\right]\\
\leq& C\E\left[|\xi|^2+\int_0^T |f(s,0,0)|^2ds\right]+C\left(\E\left[\sup_{t\in[0,T]}|S_t|^2\right]\right)^{1/2}\left(\E\left[|K^n_T|^2\right]\right)^{1/2}\\
\leq &C\E\left[|\xi|^2+\int_0^T |f(s,0,0)|^2ds+\sup_{t\in[0,T]}|S_t|^2\right]+\varepsilon\E\left[|K^n_T|^2\right].
\end{split}
\end{equation}
On the other hand, note that 
\begin{align*}
    K^n_T=Y^n_0-\xi-\int_0^T f(s,Y^n_s,Z^n_s)ds+\int_0^T Z^n_s dB_s.
\end{align*}
Then, we have 
\begin{equation}\label{eq1.62'''}
    \E[|K^n_T|^2]\leq C\E\left[|\xi|^2+\int_0^T|f(s,0,0)|^2ds+\int_0^T|Z^n_s|^2ds\right]+C\sup_{t\in[0,T]}\E\left[|Y^n_t|^2\right].
\end{equation}
Plugging the above inequality into \eqref{eq1.62''} and choosing $\varepsilon$ sufficiently small, we obtain that
\begin{align*}
    \sup_{t\in[0,T]}\E\left[|Y^n_t|^2\right]+\E\left[\int_0^T |Z_s^n|^2 ds\right]\leq C\E\left[|\xi|^2+\int_0^T|f(s,0,0)|^2ds+\sup_{t\in[0,T]}|S_t|^2\right],
\end{align*}
which together with \eqref{eq1.62'''} implies that 
\begin{align*}
    \sup_{t\in[0,T]}\E\left[|Y^n_t|^2\right]+\E\left[\int_0^T |Z_s^n|^2 ds\right]+\E[|K^n_T|^2]\leq C\E\left[|\xi|^2+\int_0^T|f(s,0,0)|^2ds+\sup_{t\in[0,T]}|S_t|^2\right].
\end{align*}
The desired result follows form the B-D-G inequality.
\end{proof}

In the following, we show that the running supremum of the negative part of the conditional expectation $\E[Y^n_t-S_t|\mathcal{G}_t]$ converges to $0$ under the norm $\|\cdot\|_{\mathcal{S}^2}$, which plays an important role as Lemma 6.1 in \cite{KKPPQ} for classical reflected BSDEs,  Eqs. (6.14)-(6.15) in \cite{CK} for doubly reflected BSDEs and Eq. (29) in \cite{BEH} for mean reflected BSDEs. Moreover, we may obtain the convergence rate and the necessity of the explicit rate is given in Remark \ref{r2} below. Due to the lack of the comparison theorem for general conditional expectation BSDEs (i.e., for the case that the driver is not nondecreasing w.r.t. $y'$), the method used in \cite{KKPPQ} is not valid. In order to obtain the desired result, motivated by \cite{CK} (see Theorem 6.5 and Eq. (6.4)), we propose the following additional assumptions on the driver $f$ and the obstacle $S$.

\begin{itemize}
    \item[($H'_f$)] For any $(\omega,s,z)\in \Omega\times[0,T]\times\mathbb{R}$, there exists a constant $L$ such that $f(\omega,s,0,z)\geq L$ or $f$ is independent of $z$ and 
    \begin{align*}
        \E\left[\sup_{t\in[0,T]}|f(t,0)|^2\right]<\infty.
    \end{align*}
    \item[($H'_S$)] $S$ is an It\^{o} process with representation
    \begin{align}\label{itoprocess}
        S_t=S_0+\int_0^t b_s ds+\int_0^t \sigma_s dB_s,
    \end{align}
    where $b\in\mathcal{S}^2$ and $\sigma\in\mathcal{H}^2$.
\end{itemize}

\begin{lemma}\label{estimateYn-S}
Suppose that $f$ satisfies the usual conditions and $S$ has representation \eqref{itoprocess} with $b,\sigma\in\mathcal{H}^2$. Given $\xi\in L^2(\mathcal{F}_T)$ with $\E[\xi-S_T|\mathcal{G}_T]\geq 0$, we have 
\begin{align}\label{lem3.2 eq1}
    \lim_{n\rightarrow\infty}\E\left[\sup_{t\in[0,T]}|(\E[Y_t^n-S_t|\mathcal{G}_t])^-|^2 \right]=0.
\end{align}
Moreover, suppose that ($H'_f$) and ($H'_S$) hold. Then, there exists a constant $C$ independent of $n$, such that
\begin{align}\label{lem3.2 eq2}
\E\left[\sup_{t\in[0,T]}|(\E[Y_t^n-S_t|\mathcal{G}_t])^-|^2 \right]\leq \frac{C}{n^2}.
\end{align}
\end{lemma}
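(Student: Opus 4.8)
The plan is to reduce the statement to the single identity obtained by taking $\mathcal{G}_t$-conditional expectations in the penalized equation. Since $\mathcal{G}_t\subset\mathcal{F}_t$, the increment $\int_t^T Z^n_s\,dB_s$ is centred given $\mathcal{F}_t$ and hence, by the tower property, also given $\mathcal{G}_t$; moreover $K^n$ is $\mathbb{G}$-adapted because its integrand is $\mathcal{G}_s$-measurable. Using the representation of $S$ to replace $S_t$ by $S_T-\int_t^T b_s\,ds-\int_t^T\sigma_s\,dB_s$ and writing $u^n_t:=\E[Y^n_t-S_t|\mathcal{G}_t]$, conditioning on $\mathcal{G}_t$ gives
\[
u^n_t=\E[\xi-S_T|\mathcal{G}_t]+\E\Big[\int_t^T\big(f(s,Y^n_s,Z^n_s)+b_s\big)\,ds\Big|\mathcal{G}_t\Big]+\E[K^n_T-K^n_t|\mathcal{G}_t].
\]
Here the first term is nonnegative (since $\E[\xi-S_T|\mathcal{G}_T]\ge0$ and $\mathcal{G}_t\subset\mathcal{G}_T$) and the last is nonnegative (since $K^n$ is nondecreasing). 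From this one reads off that $u^n$ is a $\mathbb{G}$-semimartingale whose drift is $-\E[\eta^n_t|\mathcal{G}_t]\,dt-n(u^n_t)^-\,dt$, with $\eta^n_s:=f(s,Y^n_s,Z^n_s)+b_s$, driven by a $\mathbb{G}$-martingale $N^n$, and with terminal value $u^n_T=\E[\xi-S_T|\mathcal{G}_T]\ge0$, so $(u^n_T)^-=0$.

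For \eqref{lem3.2 eq1} I would run an energy estimate on the negative part. Applying It\^{o}'s formula to $((u^n_t)^-)^2$ and using $(u^n_T)^-=0$, the penalization contributes the favourable term $2n\int_0^T((u^n_s)^-)^2\,ds$ on the left, while the drift contributes $2\int_0^T(u^n_s)^-\E[\eta^n_s|\mathcal{G}_s]\,ds$ on the right. Taking expectations, discarding the nonnegative bracket and local-time terms, and applying the Cauchy-Schwarz inequality gives
\[
2n\,\E\Big[\int_0^T((u^n_s)^-)^2\,ds\Big]\le 2\Big(\E\Big[\int_0^T((u^n_s)^-)^2\,ds\Big]\Big)^{1/2}\Big(\E\Big[\int_0^T|\eta^n_s|^2\,ds\Big]\Big)^{1/2}.
\]
Since $\E[\int_0^T|\eta^n_s|^2\,ds]\le C$ uniformly in $n$ by the a priori bounds of Lemma \ref{estimateYnZn}, this yields $\E[\int_0^T((u^n_s)^-)^2\,ds]\le C/n^2\to0$. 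The uniform norm is then recovered from the same It\^{o} identity, bounding the supremum of the stochastic integral by the Burkholder-Davis-Gundy inequality and controlling its bracket through the uniform $L^2$ estimates, so that $\E[\sup_{t}((u^n_t)^-)^2]\to0$.

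For the rate \eqref{lem3.2 eq2} the two extra hypotheses serve only to bound the drift from below, uniformly in $n$: under $(H'_S)$ the coefficient $b$ is controlled in $\mathcal{S}^2$, and under $(H'_f)$ the driver is either bounded below or independent of $z$, so that, after absorbing the Lipschitz term $\lambda|Y^n_s|$ via a uniform lower bound on $Y^n$, one obtains $\eta^n_s\ge-C_0$ for a constant $C_0$ independent of $n$. I would then apply the Tanaka-Meyer formula to $(u^n_t+C_0/n)^-$. On the set where this quantity is positive one has $(u^n_t)^->C_0/n$, so the drift $-\E[\eta^n_t|\mathcal{G}_t]-n(u^n_t)^-\le C_0-n(u^n_t)^-<0$; consequently every term in the backward equation for $\E[(u^n_t+C_0/n)^-]$ carries the right sign and forces $\E[(u^n_t+C_0/n)^-]=0$. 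Hence $u^n_t\ge-C_0/n$ for every $t$, which is the pointwise bound $(u^n_t)^-\le C_0/n$ and gives $\E[\sup_t((u^n_t)^-)^2]\le C_0^2/n^2$.

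The main obstacle is the first step: because $\mathbb{G}$ is not assumed right-continuous or complete, the $\mathbb{G}$-semimartingale decomposition of $u^n$ cannot be taken from the general optional-projection or Doob-Meyer theory but has to be extracted from the explicit conditional representation together with the measurable-modification results recorded in Section 2. The second delicate point, needed only for the rate, is the uniform-in-$n$ lower bound on the drift $\eta^n$: controlling the term $\lambda|Y^n_s|$ requires a lower estimate on $Y^n$ that does not degenerate as $n\to\infty$, which is exactly what $(H'_f)$ and $(H'_S)$ are designed to provide.
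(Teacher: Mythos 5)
Your overall strategy---project onto $\mathbb{G}$ first and then do stochastic calculus on $u^n_t=\E[Y^n_t-S_t|\mathcal{G}_t]$---runs into an obstacle that you name but do not resolve, and it is fatal rather than technical. Since $\mathbb{G}$ is not assumed right-continuous or complete, a $\mathbb{G}$-martingale such as your $N^n$ need not admit a c\`adl\`ag modification, so there is no It\^{o} formula for $((u^n_t)^-)^2$, no Tanaka--Meyer formula for $(u^n_t+C_0/n)^-$, and no Burkholder--Davis--Gundy inequality for $N^n$; the Kaden--Potthoff results recorded in Section 2 provide only progressively measurable modifications and continuous approximations, not a semimartingale calculus for $\mathbb{G}$. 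The paper circumvents this entirely by working at the $\mathbb{F}$-level: It\^{o}'s formula is applied to $e^{-nt}(Y^n_t-S_t)$, yielding the variation-of-constants identity \eqref{tildey}, and only afterwards are $\mathcal{G}_t$-conditional expectations taken, at which point the penalization term is discarded through the elementary inequality $x+x^-=x^+\geq 0$; no regularity of $\mathbb{G}$ is ever invoked. Note also that even if the calculus on $u^n$ were legitimate, your upgrade from $\E[\int_0^T((u^n_s)^-)^2ds]\leq C/n^2$ to the uniform statement \eqref{lem3.2 eq1} via BDG would not close: the bracket $\E[[N^n]_T]$ is only $O(1)$ in $n$, so the martingale contribution to $\sup_t((u^n_t)^-)^2$ is not small. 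The exponential kernel $e^{n(t-s)}$ in \eqref{tildey} is precisely what converts the penalization into the factors $n^{-1/2}$ and $n^{-1}$; by discarding $\E[K^n_T-K^n_t|\mathcal{G}_t]\geq 0$ instead of exploiting it through the $e^{-nt}$ transform, you lose that gain exactly where it is needed, namely in the supremum over $t$.

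A second, independent error occurs in your treatment of the rate \eqref{lem3.2 eq2}: you posit a deterministic constant $C_0$, independent of $n$, with $\eta^n_s\geq-C_0$, ``after absorbing the Lipschitz term $\lambda|Y^n_s|$ via a uniform lower bound on $Y^n$.'' No such pointwise bound exists: Lemma \ref{estimateYnZn} and $(H'_S)$ give only $\E[\sup_t|Y^n_t|^2]\leq C$ and $b\in\mathcal{S}^2$, i.e.\ $L^2$-control of the suprema, not $\omega$-wise boundedness; and replacing $C_0$ by the random variable $\sup_s(|b_s|+\lambda|Y^n_s|)$ wrecks your comparison argument with $(u^n_t+C_0/n)^-$, since that variable is anticipating at time $t$. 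The paper handles exactly this point differently: from \eqref{tildey} it derives the random pathwise bound
\begin{align*}
\left(\E\left[\widetilde{Y}^n_t\big|\mathcal{G}_t\right]\right)^-\leq \frac{L^-}{n}+\frac{1}{n}\E\left[\sup_{s\in[0,T]}(|b_s|+\lambda|Y^n_s|)\Big|\mathcal{G}_t\right],
\end{align*}
whose right-hand side is, up to the constant, a $\mathbb{G}$-martingale in $t$, and then applies Doob's maximal inequality together with the uniform estimates of Lemma \ref{estimateYnZn} to obtain \eqref{lem3.2 eq2}; the same scheme, with Cauchy--Schwarz giving $\int_t^Te^{2n(t-s)}ds\leq C/n$ in place of the crude bound $\int_t^Te^{n(t-s)}ds\leq 1/n$, yields \eqref{lem3.2 eq1} under the weaker hypothesis $b,\sigma\in\mathcal{H}^2$.
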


\begin{proof}
\textbf{Step 1.} We first show that \eqref{lem3.2 eq1} holds.  Set $\widetilde{Y}_t^n=Y^n_t-S_t$. Applying It\^{o}'s formula to $e^{-nt}\widetilde{Y}^n_t$, 
we obtain that 
\begin{equation}\begin{split}\label{tildey}
\widetilde{Y}^n_t=&(\xi-S_T)e^{n(t-T)}+\int_t^T n e^{n(t-s)}\left[\widetilde{Y}^n_s+\left(\E\left[\widetilde{Y}^n_s\big|\mathcal{G}_s\right]\right)^-\right]ds\\
&+\int_t^T e^{n(t-s)}(f(s,Y^n_s,Z^n_s)+b_s)ds-\int_t^T\widetilde{Z}^n_sdB_s\\
\geq &(\xi-S_T)e^{n(t-T)}+\int_t^T n e^{n(t-s)}\left[\widetilde{Y}^n_s+\left(\E\left[\widetilde{Y}^n_s\big|\mathcal{G}_s\right]\right)^-\right]ds\\
&-\int_t^T e^{n(t-s)}(|b_s|+\lambda|Y^n_s|+\lambda|Z^n_s|+|f(s,0,0)|)ds-\int_t^T\widetilde{Z}^n_sdB_s,
\end{split}\end{equation} 
where $\widetilde{Z}^n_t=Z^n_t-\sigma_t$. Noting that $\E[\xi-S_T|\mathcal{G}_t]=\E[\E[\xi-S_T|\mathcal{G}_T]|\mathcal{G}_t]\geq 0$ and taking conditional expectation w.r.t. $\mathcal{G}_t$ on both sides imply that 
\begin{equation*}\begin{split}
\E\left[\widetilde{Y}^n_t\big|\mathcal{G}_t\right]
\geq &\E\left[\int_t^T n e^{n(t-s)}\left[\widetilde{Y}^n_s+\left(\E\left[\widetilde{Y}^n_s\big|\mathcal{G}_s\right]\right)^-\right]ds\Big|\mathcal{G}_t\right]\\
&-\E\left[\int_t^T e^{n(t-s)}(|b_s|+\lambda|Y^n_s|+\lambda|Z^n_s|+|f(s,0,0)|)ds\Big|\mathcal{G}_t\right].
\end{split}\end{equation*} 
Observe that 
\begin{align*}
 &\E\left[\int_t^T n e^{n(t-s)}\left[\widetilde{Y}^n_s+\left(\E\left[\widetilde{Y}^n_s\big|\mathcal{G}_s\right]\right)^-\right]ds\Big|\mathcal{G}_t\right]\\   
 =&\int_t^T n e^{n(t-s)}\E\left[\widetilde{Y}^n_s+\left(\E\left[\widetilde{Y}^n_s\big|\mathcal{G}_s\right]\right)^-\Big|\mathcal{G}_t\right]ds\\
 =&\int_t^T n e^{n(t-s)}\E\left[\E\left[\widetilde{Y}^n_s\big|\mathcal{G}_s\right]+\left(\E\left[\widetilde{Y}^n_s\big|\mathcal{G}_s\right]\right)^-\Big|\mathcal{G}_t\right]ds\geq 0
\end{align*}
All the above analysis implies that 
\begin{align*}
    \E\left[\widetilde{Y}^n_t\big|\mathcal{G}_t\right]
\geq -\E\left[\int_t^T e^{n(t-s)}(|b_s|+\lambda|Y^n_s|+\lambda|Z^n_s|+|f(s,0,0)|)ds\Big|\mathcal{G}_t\right].
\end{align*}
Consequently, we have
\begin{align*}
    \left(\E\left[\widetilde{Y}^n_t\big|\mathcal{G}_t\right]\right)^-
&\leq 
\E\left[\int_t^T e^{n(t-s)}(|b_s|+\lambda|Y^n_s|+\lambda|Z^n_s|+|f(s,0,0)|)ds\Big|\mathcal{G}_t\right].\\
&\leq \left(\int_t^T e^{2n(t-s)}ds\right)^{1/2}\left(\int_t^T \left(\E[|b_s|+\lambda|Y^n_s|+\lambda|Z^n_s|+|f(s,0,0)||\mathcal{G}_t]\right)^2dt\right)^{1/2}\\
&\leq \frac{C}{n^{1/2}} \left(\int_0^T \left(\E[|b_s|+|Y^n_s|+|Z^n_s|+|f(s,0,0)||\mathcal{G}_t]\right)^2 dt\right)^{1/2}.
\end{align*}
Applying Lemma \ref{estimateYnZn} and Doob's maximal inequality yields the desired result.

\textbf{Step 2.} Now, we show \eqref{lem3.2 eq2} holds. We only prove the case that $f$ depends on $z$ and is bounded from below. In this case, noting that 
\begin{align*}
    f(s,Y^n_s,Z^n_s)\geq f(s,0,Z^n_s)-\lambda|Y^n_s|\geq L-\lambda|Y^n_s|,
\end{align*}
thus, \eqref{tildey} turns into 
\begin{align*}
\widetilde{Y}^n_t
\geq &(\xi-S_T)e^{n(t-T)}+\int_t^T n e^{n(t-s)}\left[\widetilde{Y}^n_s+\left(\E\left[\widetilde{Y}^n_s\big|\mathcal{G}_s\right]\right)^-\right]ds\\
&-\int_t^T e^{n(t-s)}(|b_s|+\lambda|Y^n_s|)ds+\frac{L}{n}-\int_t^T\widetilde{Z}^n_sdB_s.
\end{align*} 
By a similar analysis as in Step 1, we obtain that 
\begin{align*}
    \E\left[\widetilde{Y}^n_t\big|\mathcal{G}_t\right]
\geq \frac{L}{n}
-\E\left[\int_t^T e^{n(t-s)}(|b_s|+\lambda|Y^n_s|)ds\Big|\mathcal{G}_t\right].
\end{align*}
Consequently, we have
\begin{align*}
    \left(\E\left[\widetilde{Y}^n_t\big|\mathcal{G}_t\right]\right)^-
&\leq \frac{L^-}{n}
+\E\left[\int_t^T e^{n(t-s)}(|b_s|+\lambda|Y^n_s|)ds\Big|\mathcal{G}_t\right]\\
&\leq \frac{L^-}{n}
+\frac{1}{n}\E\left[\sup_{s\in[0,T]}(|b_s|+\lambda|Y^n_s|)\Big|\mathcal{G}_t\right].
\end{align*}
Applying Lemma \ref{estimateYnZn} and Doob's maximal inequality again, we obtain the desired result.
\end{proof}

The following lemma indicates that the sequences $\{Y^n\}_{n\in\mathbb{N}}$, $\{Z^n\}_{n\in\mathbb{N}}$ and $\{K^n\}_{n\in\mathbb{N}}$ are Cauchy sequences in $\mathcal{S}^2$, $\mathcal{H}^2$ and $\mathcal{S}^2$, respectively.

\begin{lemma}\label{limit}
    Suppose that $f$ satisfies the usual conditions and $(H'_f)$-$(H'_S)$ hold. Given $\xi\in L^2(\mathcal{F}_T)$ with $\E[\xi-S_T|\mathcal{G}_T]\geq0$,  we have 
    \begin{align*}
        &\lim_{m,n\rightarrow \infty}\E\left[\sup_{t\in[0,T]}|Y^m_t-Y^n_t|^2\right]=0,\\ &\lim_{m,n\rightarrow \infty}\E\left[\int_0^T|Z^m_t-Z^n_t|^2dt\right]=0, \\ &\lim_{m,n\rightarrow \infty}\E\left[\sup_{t\in[0,T]}|K^m_t-K^n_t|^2\right]=0.
    \end{align*}
\end{lemma}

\begin{proof}
    For simplicity, we write $A^{m,n}_t=A^m_t-A^n_t$ for $A=Y,Z,K$ and $f^{m,n}_t=f(t,Y^m_t,Z^m_t)-f(t,Y^n_t,Z^n_t)$. Applying It\^{o}'s formula to $e^{\beta t}|Y^{m,n}_t|^2$, we have
    \begin{equation}\label{eq1.64}\begin{split}
        &e^{\beta t}|{Y}^{m,n}_t|^2+\int_t^T \beta e^{\beta s}|{Y}^{m,n}_s|^2 ds+\int_t^T e^{\beta s}|Z_s^{m,n}|^2 ds\\
=&\int_t^T 2e^{\beta s}{Y}^{m,n}_sf^{m,n}_sds-\int_t^T 2e^{\beta s}{Y}_s^{m,n} Z_s^{m,n}dB_s+\int_t^T 2e^{\beta s}{Y}_s^{m,n}dK^{m,n}_s\\
\leq &2(\lambda+\lambda^2)\int_t^T e^{\beta s}|Y^{m,n}_s|^2ds+\frac{1}{2}\int_t^T e^{\beta s}|Z^{m,n}|^2 ds \\
&-\int_t^T 2e^{\beta s}{Y}_s^{m,n} Z_s^{m,n}dB_s+\int_t^T 2e^{\beta s}{Y}_s^{m,n}dK^{m,n}_s.
    \end{split}\end{equation}
Simple calculation yields that for any $u\in[0,t]$, 
\begin{equation}\begin{split}\label{ymnkmn}
    &\E\left[\int_t^T e^{\beta s}{Y}_s^{m,n}dK^{m,n}_s\Big|\mathcal{G}_u\right]\\
    =&\int_t^T e^{\beta s}\E\left[\left(\widetilde{Y}^m_s-\widetilde{Y}^n_s\right)\left(m\left(\E\left[\widetilde{Y}^m_s\big|\mathcal{G}_s\right]\right)^--n\left(\E\left[\widetilde{Y}^n_s\big|\mathcal{G}_s\right]\right)^-\right)\Big|\mathcal{G}_u\right]ds\\
    =&\int_t^T e^{\beta s}\E\left[\left(\E\left[\widetilde{Y}^m_s\big|\mathcal{G}_s\right]-\E\left[\widetilde{Y}^n_s\big|\mathcal{G}_s\right]\right)\left(m\left(\E\left[\widetilde{Y}^m_s\big|\mathcal{G}_s\right]\right)^--n\left(\E\left[\widetilde{Y}^n_s\big|\mathcal{G}_s\right]\right)^-\right)\Big|\mathcal{G}_u\right]ds\\
    \leq &(m+n)\E\left[\int_t^T e^{\beta s}\left(\E\left[\widetilde{Y}^m_s\big|\mathcal{G}_s\right]\right)^-\left(\E\left[\widetilde{Y}^n_s\big|\mathcal{G}_s\right]\right)^- \Big|\mathcal{G}_u\right]\\
    \leq &e^{\beta T}\E\left[\sup_{s\in[0,T]} \left(\E\left[\widetilde{Y}^n_s\big|\mathcal{G}_s\right]\right)^-K^m_T+\sup_{s\in[0,T]} \left(\E\left[\widetilde{Y}^m_s\big|\mathcal{G}_s\right]\right)^-K^n_T\Big|\mathcal{G}_u\right],
    \end{split}
\end{equation}
where $\widetilde{Y}_t^i=Y^i_t-S_t$ for $i=m,n$. Choosing $\beta=2(\lambda+\lambda^2)$, taking expectations on both sides of \eqref{eq1.64}, we obtain that for any $t\in[0,T]$, 
\begin{align*}
    &\E\left[|Y^{m,n}_t|^2\right]+\E\left[\int_t^T |Z^{m,n}_s|^2 ds\right]\\\leq &C\E\left[\sup_{s\in[0,T]} \left(\E\left[\widetilde{Y}^n_s\big|\mathcal{G}_s\right]\right)^-K^m_T+\sup_{s\in[0,T]} \left(\E\left[\widetilde{Y}^m_s\big|\mathcal{G}_s\right]\right)^-K^n_T\right]\\
    \leq &C\left(\E\left[\sup_{s\in[0,T]} \Big|\left(\E\left[\widetilde{Y}^n_s\big|\mathcal{G}_s\right]\right)^-\Big|^2\right]\right)^{1/2}\left(\E\left[|K^m_T|^2\right]\right)^{1/2}\\ &+C\left(\E\left[\sup_{s\in[0,T]} \Big|\left(\E\left[\widetilde{Y}^m_s\big|\mathcal{G}_s\right]\right)^-\Big|^2\right]\right)^{1/2}\left(\E\left[|K^n_T|^2\right]\right)^{1/2}.
\end{align*}
By Lemma \ref{estimateYnZn} and Lemma \ref{estimateYn-S}, we have 
\begin{equation}\label{eq1.65}
    \lim_{m,n\rightarrow \infty} \sup_{t\in[0,T]}\E\left[|Y^{m,n}_t|^2\right]=0, \ \lim_{m,n\rightarrow\infty}\E\left[\int_0^T |Z^{m,n}_s|^2 ds\right]=0.
\end{equation}
Recalling \eqref{eq1.64} and $\beta=2(\lambda+\lambda^2)$, we have 
\begin{align*}
    \sup_{t\in[0,T]}|Y^{m,n}_t|^2\leq &C\Bigg(\sup_{t\in[0,T]}\Big|\int_t^T {Y}_s^{m,n} Z_s^{m,n}dB_s\Big|+\int_0^T|Y^{m,n}_s|d(K^m_s+K^n_s)\Bigg)\\
    \leq &C\Bigg(\int_0^T |{Y}_s^{m,n}|ds \left[\sup_{s\in[0,T]}\left(m\left(\E\left[\widetilde{Y}^m_s\big|\mathcal{G}_s\right]\right)^-+n\left(\E\left[\widetilde{Y}^n_s\big|\mathcal{G}_s\right]\right)^-\right)\right]\\
    &\ \ \ \ \ \ +\sup_{t\in[0,T]}\Big|\int_t^T {Y}_s^{m,n} Z_s^{m,n}dB_s\Big|\Bigg).
\end{align*}
Applying the B-D-G inequality, we obtain that 
\begin{equation*}\begin{split}\label{eq1.65'}
    \E\left[\sup_{t\in[0,T]}\Big|\int_t^T {Y}_s^{m,n} Z_s^{m,n}dB_s\Big|\right]\leq& C\E\left[\left(\int_0^T |Y^{m,n}_s Z^{m,n}_s|^2ds\right)^{1/2}\right]\\
    \leq &C\E\left[\sup_{t\in[0,T]}|Y^{m,n}_t|\left(\int_0^T | Z^{m,n}_s|^2ds\right)^{1/2}\right]\\
    \leq& C\left(\E\left[\sup_{t\in[0,T]}|Y^{m,n}_t|^2\right]\right)^{1/2}\left(\E\left[\int_0^T | Z^{m,n}_s|^2ds\right]\right)^{1/2}\\
    \leq &\varepsilon \E\left[\sup_{t\in[0,T]}|Y^{m,n}_t|^2\right]+C\E\left[\int_0^T | Z^{m,n}_s|^2ds\right].
    \end{split}
\end{equation*}
By Lemma \ref{estimateYn-S}, we have
\begin{align*}
    &\E\left[\int_0^T |{Y}_s^{m,n}|ds \sup_{s\in[0,T]}m\left(\E\left[\widetilde{Y}^m_s\big|\mathcal{G}_s\right]\right)^-\right]\\
    \leq &\left(\E\left[\left(\int_0^T |{Y}_s^{m,n}|ds\right)^2\right]\right)^{1/2}\left(m^2\E\left[\sup_{s\in[0,T]}\Bigg|\left(\E\left[\widetilde{Y}^m_s\big|\mathcal{G}_s\right]\right)^-\Bigg|^2\right]\right)^{1/2}\\
    \leq & C\left(\E\left[\int_0^T |{Y}_s^{m,n}|^2ds\right]\right)^{1/2}.
\end{align*}
Choosing $\varepsilon<1$, all the above analysis indicates that 
\begin{align*}
    \E\left[\sup_{t\in[0,T]}|Y^{m,n}_t|^2\right]\leq C\left(\left(\E\left[\int_0^T |{Y}_s^{m,n}|^2ds\right]\right)^{1/2}+\E\left[\int_0^T | Z^{m,n}_s|^2ds\right]\right).
\end{align*}
It follows from \eqref{eq1.65} that  
\begin{align*}
    \lim_{m,n\rightarrow \infty}\E\left[\sup_{t\in[0,T]}|Y^m_t-Y^n_t|^2\right]=0.
\end{align*}
Finally, note that 
\begin{align*}
    K^{m,n}_t=Y^{m,n}_0-Y^{m,n}_t+\int_0^t Z^{m,n}_sdB_s-\int_0^t f^{m,n}_sds.
\end{align*}
Simple calculation yields that 
\begin{align*}
    \lim_{m,n\rightarrow \infty}\E\left[\sup_{t\in[0,T]}|K^m_t-K^n_t|^2\right]=0.
\end{align*}
The proof is complete.
\end{proof}

\begin{remark}\label{r2}
    Consider the case that $\mathbb{G}=\mathbb{F}$. In this case, we have 
    $K^n_t=\int_0^t n(Y^n_s-S_s)^-ds$. Simple calculation yields that 
    \begin{equation}\label{eq1.65''}\begin{split}
        \int_t^T Y^{m,n}_s dK^{m,n}_s=&\int_t^T m(Y^m_s-S_s)(Y^m_s-S_s)^-ds-\int_t^T(Y^m_s-S_s)dK^n_s\\
        &+\int_t^T n(Y^n_s-S_s)(Y^n_s-S_s)^-ds-\int_t^T(Y^n_s-S_s)dK^m_s\\
        \leq &\int_t^T(Y^m_s-S_s)^-dK^n_s+\int_t^T(Y^n_s-S_s)^-dK^m_s\\
        \leq &\sup_{s\in[0,T]}(Y^m_s-S_s)^- K^n_T+\sup_{s\in[0,T]}(Y^n_s-S_s)^- K^m_T.
        \end{split}
    \end{equation}
    Hence, provided that 
    \begin{align*}
        \lim_{n\rightarrow\infty} \E\left[\sup_{t\in[0,T]}|(Y_t^n-S_t)^-|^2 \right]=0
    \end{align*}
    and recalling the proof in Section 6 of \cite{KKPPQ}, we obtain that $\{Y^n\}_{n\in\mathbb{N}}$  is a Cauchy sequence in $\mathcal{S}^2$. However, for the general case that $\mathbb{G}\neq \mathbb{F}$, the following inequality similar to \eqref{eq1.65''} may not hold
    \begin{align*}
       \int_t^T Y^{m,n}_s dK^{m,n}_s\leq  \sup_{s\in[0,T]}(\E[Y^m_s-S_s|\mathcal{G}_s])^- K^n_T+\sup_{s\in[0,T]}(\E[Y^n_s-S_s|\mathcal{G}_s])^- K^m_T.
    \end{align*}
    Actually, we only have Eq. \eqref{ymnkmn}. Therefore, in order to derive that
    \begin{align*}
        \lim_{m,n\rightarrow \infty}\E\left[\sup_{t\in[0,T]}|Y^m_t-Y^n_t|^2\right]=0,
    \end{align*}
    we need to establish an explicit convergence rate as stated in Lemma \ref{estimateYn-S}.
\end{remark}


Now, we state the main result in this section.
\begin{theorem}\label{main2}
Suppose that $f$ satisfies the usual conditions and $(H'_f)$-$(H'_S)$ hold. Given $\xi\in L^2(\mathcal{F}_T)$ with $\E[\xi-S_T|\mathcal{G}_T]\geq 0$, the BSDE with conditional reflection \eqref{nonlinearyz} has a unique solution $(Y,Z,K)$. Furthermore, $(Y,Z,K)$ is the limit of $(Y^n,Z^n,K^n)$.
\end{theorem}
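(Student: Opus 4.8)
The plan is to realize $(Y,Z,K)$ as the strong limit of the penalized triples and then verify each requirement of \eqref{nonlinearyz}, treating uniqueness by a separate It\^o estimate. By Lemma \ref{limit}, $\{Y^n\}$, $\{Z^n\}$, $\{K^n\}$ are Cauchy in the complete spaces $\mathcal{S}^2$, $\mathcal{H}^2$, $\mathcal{S}^2$, so they converge to some $(Y,Z,K)\in\mathcal{S}^2\times\mathcal{H}^2\times\mathcal{S}^2$. Passing to the limit in the penalized equation $Y^n_t=\xi+\int_t^T f(s,Y^n_s,Z^n_s)ds+K^n_T-K^n_t-\int_t^T Z^n_s dB_s$ --- using the Lipschitz continuity of $f$ for the generator term, the Burkholder--Davis--Gundy inequality for the stochastic integral, and the $\mathcal{S}^2$-convergence of $K^n$ --- shows that $(Y,Z,K)$ satisfies the first line of \eqref{nonlinearyz}. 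Each $K^n_t=\int_0^t n(\E[Y^n_s-S_s|\mathcal{G}_s])^-ds$ is $\mathbb{G}$-adapted, continuous, nondecreasing and starts at $0$, and all of these are preserved under uniform limits, so $K\in\mathcal{A}^2_{\mathbb{G}}$. Finally, since conditional expectation is an $L^2$-contraction, $\E[Y^n_\cdot|\mathcal{G}_\cdot]\to\E[Y_\cdot|\mathcal{G}_\cdot]$ in $\mathcal{S}^2$, so Lemma \ref{estimateYn-S} gives $(\E[Y_t-S_t|\mathcal{G}_t])^-=0$, i.e.\ the reflection constraint $\E[Y_t|\mathcal{G}_t]\geq\E[S_t|\mathcal{G}_t]$.

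The delicate step is the minimality (Skorokhod) condition $\int_0^T\E[Y_t-S_t|\mathcal{G}_t]dK_t=0$. Since the integrand is nonnegative and $K$ is nondecreasing, the integral is a.s.\ nonnegative, so it suffices to prove its expectation is $\leq 0$. Because $K$ is $\mathbb{G}$-adapted, the increments $dK_t$ may be replaced by their $\mathbb{G}$-optional projection, giving the identity $\E[\int_0^T X_t dK_t]=\E[\int_0^T \E[X_t|\mathcal{G}_t]dK_t]$ for measurable $X$ (for the absolutely continuous $K^n$ this is an elementary Fubini computation, since the density $n(\E[\widetilde{Y}^n_t|\mathcal{G}_t])^-$ is $\mathcal{G}_t$-measurable). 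Applying this to $K^n$ and using $x\,x^-=-|x^-|^2$ yields
\begin{align*}
\E\left[\int_0^T(Y^n_t-S_t)dK^n_t\right]=\E\left[\int_0^T\E[Y^n_t-S_t|\mathcal{G}_t]dK^n_t\right]=-n\,\E\left[\int_0^T\left|(\E[\widetilde{Y}^n_t|\mathcal{G}_t])^-\right|^2dt\right]\leq0.
\end{align*}
Here the integrand $Y^n_t-S_t$ is a continuous process converging uniformly to the continuous process $Y_t-S_t$, while $K^n\to K$ uniformly with uniformly bounded total variation by Lemma \ref{estimateYnZn}; hence $\int_0^T(Y^n_t-S_t)dK^n_t\to\int_0^T(Y_t-S_t)dK_t$ and $\E[\int_0^T(Y_t-S_t)dK_t]\leq0$. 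Converting back by the same $\mathbb{G}$-projection identity identifies this with $\E[\int_0^T\E[Y_t-S_t|\mathcal{G}_t]dK_t]$, which is therefore both $\geq0$ and $\leq0$, giving the minimality condition.

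For uniqueness, let $(Y,Z,K)$ and $(Y',Z',K')$ be two solutions and apply It\^o's formula to $e^{\beta t}|Y_t-Y'_t|^2$. After taking expectations the only nonstandard contribution is $\E[\int_t^T e^{\beta s}(Y_s-Y'_s)d(K_s-K'_s)]$; using the same $\mathbb{G}$-projection identity this equals $\E[\int_t^T e^{\beta s}\E[Y_s-Y'_s|\mathcal{G}_s]d(K_s-K'_s)]$, and splitting $\E[Y_s-Y'_s|\mathcal{G}_s]=\E[Y_s-S_s|\mathcal{G}_s]-\E[Y'_s-S_s|\mathcal{G}_s]$ together with the two minimality conditions and the constraints $\E[Y_s-S_s|\mathcal{G}_s]\geq0$, $\E[Y'_s-S_s|\mathcal{G}_s]\geq0$ shows this cross term is nonpositive. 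The Lipschitz property of $f$, a Young-inequality absorption of the $Z$-term, a suitable choice of $\beta$, and Gr\"onwall's inequality then force $Y=Y'$, after which $Z=Z'$ and $K=K'$ follow from the equation. I expect the main obstacle to be exactly this $\mathbb{G}$-projection identity and the attendant sign control of the cross term: the usual pathwise relation $\int(Y_s-S_s)dK_s=0$ is not available here (cf.\ Remark \ref{r2}), so one must route every occurrence of $dK$ through the conditional expectation, which is also where the weak assumptions on $\mathbb{G}$ (handled via Proposition \ref{measurability}) are felt most acutely.
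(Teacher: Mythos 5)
Your proposal is correct and follows the same penalization skeleton as the paper, but it is more self-contained than the paper's actual proof, which is quite terse: the paper obtains $(Y,Z,K)$ as the limit from Lemma \ref{limit}, gets the constraint $\E[Y_t|\mathcal{G}_t]\geq\E[S_t|\mathcal{G}_t]$ from Lemma \ref{estimateYn-S} exactly as you do, but then \emph{cites} Theorem 2.3 of \cite{HHL} for uniqueness and \emph{omits} the Skorokhod condition entirely, saying only that it is ``similar'' to Section 6 of \cite{KKPPQ}. You instead prove both in-house, and both of your arguments hinge on the projection identity $\E[\int_0^T X_t\,dK_t]=\E[\int_0^T\E[X_t|\mathcal{G}_t]\,dK_t]$: for the penalized $K^n$ this is indeed an elementary Fubini computation (the density $n(\E[\widetilde{Y}^n_t|\mathcal{G}_t])^-$ is $\mathcal{G}_t$-measurable), and your route through the \emph{continuous} integrand $Y_t-S_t$ for the limit passage, rather than the possibly discontinuous $\E[Y_t-S_t|\mathcal{G}_t]$, is exactly the right move, since weak convergence of the measures $dK^n\to dK$ only pairs with continuous integrands. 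The one place where your sketch outruns what is actually available off the shelf is the projection identity for the \emph{limit} process $K$ (and for the arbitrary solutions in your uniqueness argument): $K$ is no longer absolutely continuous with $\mathcal{G}_t$-measurable density, so you appeal to $\mathbb{G}$-optional projection, but the standard optional/dual projection theorems are stated for filtrations satisfying the usual conditions, which the paper pointedly does \emph{not} assume for $\mathbb{G}$. You flag this yourself, and it is fixable --- e.g., conditional expectations are unchanged under augmentation by $\P$-null sets, and for continuous increasing $K$ one can work with the predictable projection, or build the identity by monotone class from the progressively measurable versions supplied by Proposition \ref{measurability} --- but as written it is an assertion rather than a proof, and it is essentially the technical content that the paper delegates to \cite{HHL}. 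What your approach buys is a proof of Theorem \ref{main2} that does not lean on the contraction-based results of \cite{HHL} at all (making the penalization construction genuinely independent, in line with Remark \ref{remark3.6}); what the paper's citations buy is brevity, at the cost of hiding precisely the measure-theoretic step you identified as delicate.
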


\begin{proof}
Uniqueness is a direct consequence of Theorem 2.3 in \cite{HHL}. By Lemma \ref{limit}, there exists a triple of processes $(Y,Z,K)$, such that 
 \begin{align*}
        &\lim_{n\rightarrow \infty}\E\left[\sup_{t\in[0,T]}|Y_t-Y^n_t|^2\right]=0,\\ &\lim_{n\rightarrow \infty}\E\left[\int_0^T|Z_t-Z^n_t|^2dt\right]=0, \\ &\lim_{n\rightarrow \infty}\E\left[\sup_{t\in[0,T]}|K_t-K^n_t|^2\right]=0.
    \end{align*}
It remains to prove that the triple of limit processes $(Y,Z,K)$ is the solution to the BSDE with conditional reflection. First, applying Lemma \ref{estimateYn-S}, we obtain that    
\begin{align*}
\E\left[\sup_{t\in[0,T]}|(\E[Y_t-S_t|\mathcal{G}_t])^-|^2 \right]=0,
\end{align*}
which implies that $\E[Y_t|\mathcal{G}_t]\geq \E[S_t|\mathcal{G}_t]$, for any $t\in[0,T]$. The proof for the Skorokhod condition is similar with the one for the classical reflected case (see Section 6 in \cite{KKPPQ}). So we omit it. The proof is complete. 
\end{proof}

\begin{remark}\label{remark3.6}
Recall that Theorem 2.7 in \cite{HHL} also gives the well-posedness of conditional reflected BSDE \eqref{nonlinearyz}. Although the driver $f$ is only required to satisfy the usual conditions and the obstacle $S$ belongs to $\mathcal{S}^2$ in \cite{HHL}, the subfiltration $\mathbb{G}$ should satisfy the following  assumptions
     \begin{itemize}
        \item[(i)] $\mathbb{G}$ satisfies the usual conditions of right-continuity and completeness;
        \item [(ii)] $\mathbb{G}$ is left-quasi-continuous. That is, $\mathbb{G}$ is left-continuous along stopping times.
    \end{itemize}
Compared with the result in \cite{HHL}, our assumption for $\mathbb{G}$ is weaker.
\end{remark}

 \end{document}